\tikzstyle{vertex}=[circle, draw, inner sep=0pt, minimum size=1pt]
\newcommand{\vertex}{\node[vertex]}
\numberwithin{equation}{section}
\journalname{Indian J. Pure Appl. Math.}
\begin{document}

\title{Roman domination number of zero-divisor graphs over commutative rings}

\titlerunning{Roman domination number of zero-divisor graphs over commutative rings}        % if too long for running head

\author{Ravindra Kumar$^1$\and Om Prakash$^{*1}$  %etc.
}
\authorrunning{R. Kumar and O. Prakash}

\institute{\at $^1$
              Department of Mathematics\\
              Indian Institute of Technology Patna, Patna 801 106, India \\
              \email{om@iitp.ac.in(*corresponding author), ravindra.pma15@iitp.ac.in  }
          }
\date{Received: date / Accepted: date}
% The correct dates will be entered by the editor
\maketitle
\begin{abstract}
 For a graph $G= (V, E)$, a Roman dominating function is a map $f : V \rightarrow \{0, 1, 2\}$ satisfies the property that if $f(v) = 0$, then $v$ must have adjacent to at least one vertex $u$ such that $f(u)= 2$. The weight of a Roman dominating function $f$ is the value $f(V)= \Sigma_{u \in V} f(u)$, and the minimum weight of a Roman dominating function on $G$ is called the Roman domination number of $G$, denoted by $\gamma_R(G)$. The main focus of this paper is to study the Roman domination number of zero-divisor graph $\Gamma(R)$ and find the bounds of the Roman domination number of $T(\Gamma(R))$.

\keywords{Commutative ring \and Roman domination number \and Total graph \and Zero divisor graph.}
\subclass{13M99 \and 05C25}
\end{abstract}

\section{Introduction}
Let $R$ be a commutative ring with unity and $Z(R)$ be the set of zero-divisors of $R$. The zero-divisor graph of $R$, denoted by $\Gamma(R)$, is a graph with set of vertices $Z(R)- \{0\}$ such that there is an edge (undirected) between the vertices $x, y \in V(\Gamma(R))$ if and only if $xy = 0$. It is noted that $\Gamma(R)$ is an empty graph if and only if $R$ is an integral domain.\\
\indent The concept of the zero-divisor graph was introduced by Beck in \cite{beck} in 1988. Later, Anderson and Livingston \cite{ander} redefined Beck's definition in 1999 and established several fundamental results on $\Gamma(R)$. Consequently, in the last four decades, plenty of works have been reported by several researchers, a few are \cite{akbari1,akbari,ander,beck,kumar,kumar1}. Further, in $2002$, Redmond \cite{redmond} extended the study of zero-divisor graph for noncommutative rings. He defined an undirected zero-divisor graph $\Gamma(R)$ of a noncommutative ring $R$ with set of vertices $Z(R)^* = Z(R) \setminus \{0\}$ and for distinct vertices $a$ and $b$, there is an edge between them if and only if either $ab= 0$ or $ba= 0$.

\par On the other hand, the concept of the Roman domination was motivated by the defence strategies used to defend the Roman empire during the reign of Emperor Constantine the great $274-337$ AD. There were mainly eight region from Asia minor to Britain of Roman empire at the time of Constantine. To defend all the region by the four groups of legions, he imposed the certain rules. He ordered that for all cities of the Roman empire, at most two group of legions should be stationed under following conditions.

\begin{itemize}
	\item {A region is securable if a group of legion can be moved to it in a single step from an adjacent region.}
	
	\item {At least two group of legions must occupy a region before a group of legion can move out of it (i.e., at least one group of legion must remain behind).}
\end{itemize}

Based on the above conditions of the Roman Empire, presently we have the mathematical concept of Roman domination. It is initially defined and discussed by Stewart \cite{stewart} in $1999$, and later by ReVelle and Rosing \cite{revelle} in $2000$. The proper definition of Roman domination was introduced by Cockayne et al. \cite{cockayne} in 2004. After that several works have been reported on various aspects of Roman domination in the graph, including generalizations such as weak Roman domination \cite{henning}, double Roman domination \cite{ahangar, beeler}.

\par A Roman dominating function on a graph $G= (V, E)$ is a function $f : V \rightarrow \{0, 1, 2\}$ with the property that every vertex $u \in V$ for which $f(u) = 0$ is adjacent to at least one vertex $v \in V$ for which $f(v)= 2$. The weight of a Roman dominating function is the value $f(V)= \Sigma_{u \in V} f(u)$. The Roman domination number of a graph $G$, denoted by $\gamma_R(G)$, is the minimum weight of an Roman dominating function on a graph $G$. Further, let $G= (V, E)$ be a graph with $f : V \rightarrow \{0, 1, 2\},$ a function and $V_0, V_1, V_2$ be the ordered partition of $V$ induced by $f$, where $V_i= \{v \in V \vert ~ f(v)= i \}$ and $\lvert V_i \rvert = n_i$, for $i = 0, 1, 2$. It is noted that there exists a one-one correspondence between the function $f : V \rightarrow \{0, 1, 2\}$ and the ordered partition $V_0, V_1, V_2$ of $V$. Therefore, it can be represented as $f = (V_0, V_1, V_2)$. A function $f = (V_0, V_1, V_2)$ is a Roman dominating function (RDF) if the set $V_2$ dominates the set $V_0,$ i.e., $V_0 \subseteq N[V_2]$. A function $f = (V_0, V_1, V_2)$ is said to be a $\gamma_R$-function if it is an RDF and $f(V)= \gamma_R(G)$.

\par Now, we recall some definitions and notations that will be used throughout this paper. Let $G = (V, E)$ be a graph of order $n$. The open neighbourhood of any vertex $v \in V$ is the set $N(v) = \{u \in V \vert uv \in E\}$ and closed neighbourhood is the
set $N[V] = N(v) \bigcup \{v\}$. The open neighbourhood of a subset $S$ of $V$ is $N(S) = \bigcup_{v \in S}N(v)$ and the closed neighbourhood is $N[S] = N(S) \bigcup S$. A set $S \subseteq V$ is called a dominating set if every vertex of $V$ is either in $S$ or adjacent to at least one vertex in $S$. The domination number $\gamma(G)$ of a graph $G$ is the minimum cardinality among the dominating sets of $G$. A graph $G$ of order $n$ is said to be complete if every vertex in $G$ is adjacent to every other vertex in $G$ and it is denoted by $K_n$. A graph is said to be regular or $k$-regular if all its vertices have the same degree $k$. Also, a graph $G=(V, E)$ is called a bipartite graph if its vertex set $V$ can be partitioned into two subsets $V_1$ and $V_2$ such that each edge of $G$ has one end vertex in $V_1$ and another end vertex in $V_2$. It is denoted by $K_{m,n}$ where $m$ and $n$ are the numbers of vertices in $V_1$ and $V_2$, respectively. A complete bipartite graph of the form $K_{1,n}$ is called a star graph. For more basic definitions and results on graph theory, we may refer \cite{bala}.

\par Section $2$ contains some basic results on Roman domination graph. In section $3$, we present Roman domination number of a zero-divisor graph $\Gamma(R)$ for $R = R_1 \times R_2$ for different diameters of $R_1$ and $R_2$ and later we generalized it for $R= R_1 \times R_2 \times ... \times R_n$. In section $4$, we present lower and upper bounds for the Roman domination number of $T(\Gamma(R))$. Section 5 concludes the work.

\section{Basic Results}
We start this section with several classes of graphs with well-known Roman domination numbers and their straightforward calculations.
\par It is easy to see that for a complete graph $K_n$, $\gamma_R(K_n) = 2$. Let $G$ be a complete $r-$ partite graph $(r\geq 2)$ with partite set $V_1, V_2,...,V_r$ such that $\lvert V_i \rvert > 2$ for $1\leq i\leq r$. Then $\gamma_R(G) = 4$. If $\lvert V_i \rvert = 2$ for some $i$, then $\gamma_R(G) = 3$ because one vertex of that set assigned $2$ and another vertex is assigned $1$. If $\lvert V_i \rvert = 1$ for some $i$, then $\gamma_R(G) = 2$. Hence, we can say that Roman domination number of any star graph is $2$ and bistar graph is $3$.\\

\begin{example}
	Consider a ring $R = \mathbb{Z}_{25}$. The graph of $\Gamma(\mathbb{Z}_{25})$ is shown in figure $1$.
	
	\[\begin{tikzpicture}

	\vertex (A) at (-1.5,1.5) [label=left:${5}$]{};
	\vertex (B) at (0.5,1.5) [label=right:${10}$]{};
	\vertex (C) at (0.5,-0.5) [label=right:${15}$]{};
	\vertex (D) at (-1.5,-0.5) [label=left:${20}$]{};
	\path
	(A) edge (B)
	(B) edge (C)
	(C) edge (D)
	(D) edge (A)
	(A) edge (C)
	(B) edge (D)
	;
	
	\end{tikzpicture}\]
	\hspace{6.6cm} \textbf{Figure 1}  \\	
	
\end{example}

In this case, the graph $\Gamma(\mathbb{Z}_{25})$ is a complete graph of $4$ vertices i.e.$K_4$. Now, we define a function $g: V(\Gamma(\mathbb{Z}_{25})) \longrightarrow \{0, 1, 2\}$ in a way such that $g(5) = 0$, $g(10) = 0$, $g(15) = 0$ and $g(20) = 2$. Clearly, by the definition, $g$ is an RDF with weight $g(V) = \sum_{u\in V} f(u) =2$. Since, this weight is minimum, so $\gamma_R(\Gamma(\mathbb{Z}_{25})) = 2$ or, $\gamma_R(K_n) = 2$.

\par Moreover, some results on Roman domination number given by Cockayne et al. in \cite{cockayne} are given below.

\begin{proposition}
	For the classes of paths $P_n$ and cycles $C_n$,
	\par 	  $\gamma_R(P_n) = \gamma_R(C_n) = \lceil \frac{2n}{3} \rceil$.\\
	
\end{proposition}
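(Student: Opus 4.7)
The plan is to prove both equalities by establishing matching upper and lower bounds, treating paths and cycles in parallel since both are graphs of maximum degree $2$.

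For the upper bound, I would exhibit an explicit Roman dominating function of weight $\lceil 2n/3\rceil$. Label the vertices $v_1,v_2,\ldots,v_n$ (consecutively along the path or cycle). The natural recipe is to place a $2$ at every vertex of index congruent to $2\pmod 3$, and then patch up the tail according to the residue of $n$ modulo $3$. Concretely, writing $n=3k+r$ with $r\in\{0,1,2\}$: if $r=0$, assign $f(v_i)=2$ for $i\in\{2,5,\ldots,3k-1\}$ and $0$ elsewhere, giving weight $2k=2n/3$; if $r=1$, take the same $2$-vertices and additionally set $f(v_n)=1$, giving weight $2k+1=\lceil 2n/3\rceil$; if $r=2$, add one more vertex labelled $2$ at position $3k+2$, giving weight $2k+2=\lceil 2n/3\rceil$. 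In each case a direct check shows every $0$-labelled vertex is adjacent to a $2$-labelled vertex on $P_n$, and the same assignment works verbatim on $C_n$ (where the cyclic structure only makes the domination easier).

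For the lower bound, let $f=(V_0,V_1,V_2)$ be any Roman dominating function on $G\in\{P_n,C_n\}$, and write $n_i=|V_i|$. Since every vertex of $P_n$ or $C_n$ has degree at most $2$, each vertex of $V_2$ can dominate at most $2$ vertices of $V_0$, so $n_0\le 2n_2$. Combined with $n_0+n_1+n_2=n$ this yields $n\le n_1+3n_2$, hence
\[
f(V)=n_1+2n_2 \;=\; \tfrac{1}{3}\bigl(3n_1+6n_2\bigr) \;\ge\; \tfrac{1}{3}\bigl(n_1+2n\bigr) \;\ge\; \tfrac{2n}{3}.
\]
Because $f(V)$ is an integer, this forces $f(V)\ge \lceil 2n/3\rceil$, and taking the minimum over all $f$ yields $\gamma_R(G)\ge \lceil 2n/3\rceil$.

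The only genuine subtlety is the lower-bound counting: one must be careful that the bound $n_0\le 2n_2$ really uses the maximum degree $2$ rather than the exact degree sequence (so it applies uniformly to endpoints of paths and to all vertices of cycles). Everything else is routine casework on $n\bmod 3$, so I expect the main step to be setting up the inequality $n_0\le 2n_2$ cleanly and then combining it with $n_0+n_1+n_2=n$ to eliminate $n_0$ and $n_1$ in favor of the weight $n_1+2n_2$.
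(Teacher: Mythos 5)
Your proof is correct and complete: the explicit assignment gives the upper bound $\lceil 2n/3\rceil$ in all three residue classes, and the counting argument $n_0\le 2n_2$ (valid since $\Delta\le 2$ for both $P_n$ and $C_n$) combined with $n_0+n_1+n_2=n$ yields the matching lower bound. Note that the paper itself offers no proof of this proposition --- it is quoted verbatim from Cockayne et al.\ \cite{cockayne} as a known result --- so there is nothing internal to compare against; your argument is the standard one (explicit RDF for the upper bound, degree-counting for the lower bound) and is essentially the proof found in that original source.
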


Also, they have proposed a relation between domination number and Roman domination number of a graph as follows.

\begin{proposition}
	For any graph $G$,\\
	\par	$\gamma(G) \leq \gamma_R(G) \leq 2\gamma(G)$.
	
\end{proposition}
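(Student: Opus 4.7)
The plan is to prove the two inequalities separately, each by exhibiting an explicit set or function that realises the required bound.

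For the lower bound $\gamma(G) \le \gamma_R(G)$, I would start from a $\gamma_R$-function $f = (V_0, V_1, V_2)$ of minimum weight and show that the set $S := V_1 \cup V_2$ is a dominating set of $G$. Indeed, every vertex in $V_1 \cup V_2$ lies in $S$, and every vertex in $V_0$ has, by the defining property of an RDF, a neighbour in $V_2 \subseteq S$. Hence $\gamma(G) \le |S| = |V_1| + |V_2|$, and comparing this with $\gamma_R(G) = |V_1| + 2|V_2|$ gives $\gamma(G) \le \gamma_R(G)$ immediately.

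For the upper bound $\gamma_R(G) \le 2\gamma(G)$, I would go in the opposite direction: take a minimum dominating set $D$ of $G$ with $|D| = \gamma(G)$, and define $f : V \to \{0,1,2\}$ by $f(v) = 2$ for $v \in D$ and $f(v) = 0$ otherwise. In partition language, $f = (V \setminus D, \emptyset, D)$. Because $D$ dominates $V$, every vertex with $f$-value $0$ is adjacent to some vertex with $f$-value $2$, so $f$ is an RDF. Its weight is $2|D| = 2\gamma(G)$, and since $\gamma_R(G)$ is the \emph{minimum} weight over all RDFs, we obtain $\gamma_R(G) \le 2\gamma(G)$.

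There is no serious obstacle here; both directions are witnessed by a one-line construction. The only point worth emphasising is that the two constructions are in some sense dual: the lower bound extracts a dominating set from an optimal RDF by forgetting the distinction between weight $1$ and weight $2$, while the upper bound builds an RDF from an optimal dominating set by declaring every dominator to have weight $2$. Writing the argument in terms of the ordered partition $(V_0, V_1, V_2)$, as set up in the introduction, keeps the bookkeeping of $|V_1| + 2|V_2|$ versus $|V_1| + |V_2|$ completely transparent.
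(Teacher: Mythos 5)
Your proof is correct; both directions are the standard argument. Note that the paper itself offers no proof of this proposition — it is quoted from Cockayne et al.\ \cite{cockayne} — so there is nothing to compare against beyond observing that your two constructions (extracting the dominating set $V_1 \cup V_2$ from a $\gamma_R$-function, and assigning weight $2$ to every vertex of a minimum dominating set) are exactly the ones used in that reference.
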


\begin{proposition}
	For any graph $G$ of order $n$, $\gamma(G) = \gamma_R(G)$ if and only if $G = \overline{K_n}$.
	
\end{proposition}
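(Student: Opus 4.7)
The plan is to prove both directions using the ordered-partition viewpoint of an RDF introduced earlier in the paper.

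For the easy direction ($\Leftarrow$), I would first observe that $\overline{K_n}$ has no edges, so every vertex is isolated. Any dominating set must contain each isolated vertex, so $\gamma(\overline{K_n}) = n$. For the Roman side, if $f$ is any RDF then no vertex $v$ can be assigned $0$ (since $v$ would need an adjacent vertex of weight $2$, but $v$ has no neighbors at all). Hence every vertex gets weight at least $1$, and the all-ones assignment attains weight $n$, giving $\gamma_R(\overline{K_n}) = n = \gamma(\overline{K_n})$.

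For the harder direction ($\Rightarrow$), the key idea is to compare the two parameters through a $\gamma_R$-function. Let $f = (V_0, V_1, V_2)$ be a $\gamma_R$-function, so $\gamma_R(G) = n_1 + 2n_2$. I would then observe that $V_1 \cup V_2$ is a dominating set of $G$: every vertex of $V_0$ is adjacent to some vertex of $V_2$ by the RDF property, while vertices of $V_1 \cup V_2$ are trivially covered. Consequently,
\[
\gamma(G) \;\le\; n_1 + n_2.
\]
Combining this with the standing hypothesis $\gamma(G) = \gamma_R(G) = n_1 + 2n_2$ forces $n_2 \le 0$, hence $n_2 = 0$. But if $V_2 = \emptyset$, the RDF condition $V_0 \subseteq N[V_2]$ can only hold when $V_0 = \emptyset$ as well. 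Thus $V_1 = V$, so $\gamma_R(G) = n$, and therefore $\gamma(G) = n$.

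The last step is to conclude that $\gamma(G) = n$ forces $G$ to be edgeless. I would do this by contrapositive: if $G$ had any edge $uv$, then $V \setminus \{u\}$ would already be a dominating set (since $u$ is dominated by $v$), giving $\gamma(G) \le n-1 < n$, a contradiction. Hence $G$ has no edges and $G = \overline{K_n}$. I do not foresee any serious obstacle; the only subtle point is recognizing that $V_1 \cup V_2$ always dominates $G$ — everything else is bookkeeping with the weights $n_1$ and $n_2$.
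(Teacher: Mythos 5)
Your proof is correct and is essentially the standard argument (the paper itself states this proposition without proof, citing Cockayne et al.). Both directions are sound: the observation that $V_1\cup V_2$ dominates $G$ gives $\gamma(G)\le n_1+n_2\le n_1+2n_2=\gamma_R(G)$ with equality forcing $n_2=0$, hence $V_0=\emptyset$ and $\gamma(G)=n$, which indeed happens only for the edgeless graph.
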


\section{Main Results}

\begin{theorem}
	Let $S$ be a finite principal ideal local ring. Then $\gamma_R(\Gamma(S)) = 2$.
	
\end{theorem}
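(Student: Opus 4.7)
The plan is to exhibit a single vertex of $\Gamma(S)$ adjacent to every other vertex, which immediately yields an RDF of weight $2$, and then to argue that no RDF of smaller weight exists. Since $S$ is a finite principal ideal local ring, its unique maximal ideal is principal, say $\mathfrak{m}=(p)$, and because $S$ is finite the generator $p$ is nilpotent; let $n\ge 1$ be smallest with $p^{n}=0$. The standard structure theory of finite chain rings says that every non-zero element of $S$ admits a unique representation $up^{k}$ with $u\in U(S)$ and $0\le k\le n-1$, so
\[
V(\Gamma(S))=Z(S)^{*}=\{\,up^{k}:u\in U(S),\ 1\le k\le n-1\,\}.
\]

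The key observation I would use is that $p^{n-1}$ is a universal zero-divisor: for any other vertex $a=up^{k}$ with $k\ge 1$, the product $p^{n-1}a=up^{\,n-1+k}$ vanishes because $n-1+k\ge n$. Hence $p^{n-1}$ is adjacent in $\Gamma(S)$ to every other vertex. Defining $f\colon V(\Gamma(S))\to\{0,1,2\}$ by $f(p^{n-1})=2$ and $f(x)=0$ for all $x\ne p^{n-1}$ then gives an RDF of weight $2$, since $V_{2}=\{p^{n-1}\}$ dominates $V_{0}=V(\Gamma(S))\setminus V_{2}$. This yields $\gamma_{R}(\Gamma(S))\le 2$.

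For the matching lower bound, I would invoke that $\Gamma(S)$ contains at least one edge, for instance $\{p,p^{n-1}\}$ when $n\ge 3$, or any two distinct vertices of the complete graph $\Gamma(S)$ when $n=2$. An RDF of weight $1$ would assign value $1$ to a single vertex and $0$ to every other, but then a vertex of value $0$ has no neighbor of value $2$, which is impossible. Thus $\gamma_{R}(\Gamma(S))\ge 2$, and combining the two bounds gives the claim. The only substantive step is the structural description of the elements of $S$ as $up^{k}$; this is what simultaneously forces $p^{n-1}$ to annihilate every zero-divisor and ensures $\Gamma(S)$ has enough vertices for the lower bound to apply, so I do not anticipate a deeper obstacle.
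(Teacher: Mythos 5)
Your proposal is essentially the paper's own argument: both use the structure of a finite chain ring to write every vertex as $up^{k}$ and observe that $p^{n-1}$ (the paper's $a^{n-1}$) annihilates all of them, giving a weight-$2$ RDF with $V_2=\{p^{n-1}\}$. The only difference is that you explicitly justify the lower bound $\gamma_R\ge 2$, which the paper merely asserts; note, though, that both arguments silently assume $\Gamma(S)$ has at least two vertices, which fails for rings such as $\mathbb{Z}_4$ (a single isolated vertex, where the Roman domination number is $1$) or for fields (empty graph), so the caveat lies in the theorem's statement rather than in your reasoning.
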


\begin{proof}
	Let $M$ be a maximal ideal of the finite principal ideal local ring $S$. Suppose $a \in S$ such that $M = <a>$, then $M= aS$. Let the set of unit elements of $S$ be $U= \{ u_1, u_2,..., u_m \}$. Since $S$ finite, there exists a positive integer $n$ such that $a^n = 0$ and $a^{n-1} \neq 0$. Then the element of $\Gamma(S)$ is of the form $u_i a^j$ where $i \leq m, ~ j \leq n$. Then $M = \{ u_i a^j : i \leq m, ~ j \leq n\}$. Since, $a^{n-1}$ is adjacent to all vertex of $M$. So, we define Roman dominating function $f = (V_0, V_1, V_2)$ such that $V_0 = M\backslash \{a^{n-1}\}, ~ V_1= \phi$ and $V_2= a^{n-1}$. Hence, every element $x$ of $V_0$ for which $f(x)= 0$ is adjacent to element of $V_2$. Thus, the Roman dominating number $\gamma_R(\Gamma(S)) = \sum_{u\in M}f(u)= \sum_{u_0\in V_0}f(u_0)+ \sum_{u_1\in V_1}f(u_1)+ \sum_{u_2\in V_2}f(u_2)= 0+0+2= 2$.
\end{proof}

\begin{theorem}
	
	Let $R= R_1 \times R_2$ be a ring such that $diam(\Gamma (R_1))= diam(\Gamma (R_2)) = 0$ and $\lvert R_1 \rvert \geq 5 ~\& ~\lvert R_2 \rvert \geq 5$. Then $\gamma_R(\Gamma(R)) = 4$.
\end{theorem}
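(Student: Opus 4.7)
The plan is to first pin down the structure of $\Gamma(R)$ under the hypothesis. Since $diam(\Gamma(R_i))=0$ together with $|R_i|\ge 5$ forces $\Gamma(R_i)$ to have at most one vertex, and no nontrivial finite ring of order $\ge 5$ gives $\Gamma(R_i)$ a single vertex, I read the condition as saying each $R_i$ is a field (so $\Gamma(R_i)$ is empty). I would state this explicitly at the start of the proof.

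Next I would describe the vertex set of $\Gamma(R)$ where $R=R_1\times R_2$. Since both $R_i$ are fields, the only nonzero zero-divisors are of the two types $(a,0)$ with $a\in R_1\setminus\{0\}$ and $(0,b)$ with $b\in R_2\setminus\{0\}$. Let $A=\{(a,0):a\in R_1\setminus\{0\}\}$ and $B=\{(0,b):b\in R_2\setminus\{0\}\}$; then $|A|=|R_1|-1\ge 4$ and $|B|=|R_2|-1\ge 4$. A short computation shows that two vertices of $A$ multiply to a nonzero element (and similarly for $B$), while any vertex of $A$ times any vertex of $B$ equals $(0,0)$. Hence $\Gamma(R)=K_{|A|,|B|}$, a complete bipartite graph with both parts of size at least $4$.

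With the graph identified, the upper bound $\gamma_R(\Gamma(R))\le 4$ is obtained by the explicit Roman dominating function $f=(V_0,V_1,V_2)$ with $V_2=\{(a_0,0),(0,b_0)\}$ for any choice of $a_0\in R_1\setminus\{0\}$ and $b_0\in R_2\setminus\{0\}$, $V_1=\emptyset$, and $V_0$ the remaining vertices; indeed $(a_0,0)$ dominates every element of $B$ and $(0,b_0)$ dominates every element of $A$, so $V_0\subseteq N[V_2]$, and the weight is $4$.

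The main obstacle, and the only nontrivial step, is the matching lower bound $\gamma_R(\Gamma(R))\ge 4$. I would rule out weights $1,2,3$ by cases on the sizes $n_1=|V_1|$ and $n_2=|V_2|$. If $n_2=0$, then $V_0=\emptyset$, so every vertex lies in $V_1$, giving weight $|A|+|B|\ge 8$. If $n_2=1$, say the unique vertex of $V_2$ lies in $A$; since $A$ is independent in $K_{|A|,|B|}$, the remaining $|A|-1\ge 3$ vertices of $A$ must be in $V_1$, forcing weight $\ge 2+3=5$. The case when the $V_2$-vertex lies in $B$ is symmetric. Finally, if $n_2\ge 2$ then the weight is already $\ge 4$. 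Combining the two bounds gives $\gamma_R(\Gamma(R))=4$. (Equivalently, this lower bound is the content of the complete multipartite calculation in Section~2, which I would cite to shorten the argument.)
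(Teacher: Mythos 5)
Your proof is correct, and it is tighter than the paper's own argument. The paper splits into three cases according to whether each $R_i$ is an integral domain or has $Z(R_i)=\{0,a\}$ with $a^2=0$; in the first two cases it merely exhibits a weight-$4$ Roman dominating function and concludes equality, and in the third it appeals to the unproved Section~2 claim that a complete multipartite graph with all parts of size $>2$ has Roman domination number $4$. You instead observe that the hypotheses $|R_i|\ge 5$ make the first two cases vacuous --- a commutative ring with exactly one nonzero zero-divisor $a$ satisfies $\mathrm{Ann}(a)=\{0,a\}=Ra$, hence has order $4$ --- so only the complete bipartite case survives, and you then supply both the explicit weight-$4$ RDF and the lower-bound case analysis on $|V_2|$ that the paper omits. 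What your route buys is a complete proof of the equality (not just the upper bound) and an explanation of why the cardinality hypotheses are there at all; what it costs is nothing, though you should either cite or include the one-line justification of the claim that no ring of order $\ge 5$ has $\Gamma(R_i)$ equal to a single vertex, since you currently assert it without argument. One small caution: if $R$ is not assumed finite, ``field'' should be weakened to ``integral domain'' in your opening paragraph, but this changes nothing in the description of $\Gamma(R_1\times R_2)$ as $K_{|R_1|-1,|R_2|-1}$.
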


\begin{proof}
	Let $R= R_1 \times R_2$ be a ring such that $diam(\Gamma (R_1))= diam(\Gamma (R_2)) = 0$. Then we have three cases.

	\textbf{Case 1:} $Z(R_1)= \{0, a\}$ and $Z(R_2)= \{0, b\}$ and let $Reg(R_1)= \{x_1, x_2,..., x_n\}$ and $Reg(R_2)= \{y_1, y_2,..., y_m\}$. Now, we are going to construct a graph for this case.

	\[\begin{tikzpicture}

	\vertex (A) at (-3,2) [label=below:${(x_i, b)}$]{};
	\vertex (B) at (0,2) [label=above:${(0,b)}$]{};
	\vertex (C) at (3,2) [label=right:${(x_i,0)}$]{};
	\vertex (D) at (0,0.4) [label=right:${(a,0)}$]{};
	\vertex (E) at (-3,-0.5) [label=below:${(a,b)}$]{};
	\vertex (F) at (3,-0.5) [label=below:${(0,y_i)}$]{};
	\vertex (G) at (0,-1.2) [label=below:${(a,y_i)}$]{};
	\path
	(A) edge (B)
	(B) edge (C)
	(B) edge (D)
	(B) edge (E)
	(C) edge (F)
	(D) edge (E)
	(D) edge (G)
	(D) edge (F)
	;
	
	\end{tikzpicture}\]
	\hspace{6.6cm} \textbf{Figure 2} \\
	
	Also, we define a function $g : V(\Gamma(R)) \longrightarrow \{0,1,2\}$ by
	
	\[ g(x, y) =  \left\{
	\begin{array}{ll}
	2 & if~ (x, y) = (0, b) ~and ~(x,y)= (a,0) \\
	0 & otherwise \\
	\end{array}
	\right. \]
	
	Here, it is easily seen that $g$ is a Roman dominating function such that $g(v)= 2+2=4$. Hence, $\gamma_R(\Gamma(R)) = 4$.

	\textbf{Case 2:} Suppose $R_1$ is an integral domain and $Z(R_2)= \{0,b\}$, then we have the following induced subgraph.

	\[\begin{tikzpicture}

	\vertex (A) at (-1.5,1.5) [label=left:${(x_i, b)}$]{};
	\vertex (B) at (0.5,1.5) [label=right:${(0,b)}$]{};
	\vertex (C) at (0.5,-0.5) [label=right:${(x_i,0)}$]{};
	\vertex (D) at (-1.5,-0.5) [label=left:${(0,y_i)}$]{};
	\path
	(A) edge (B)
	(B) edge (C)
	(C) edge (D)
	;
	
	\end{tikzpicture}\]
	\hspace{6.6cm} \textbf{Figure 3} \\
	
	Again, we define a function $g$ as follows:
	
	\[ g(x, y) =  \left\{
	\begin{array}{ll}
	2 & if~ (x, y) = (0, b) ~and ~(x,y)= (x_i,0)~ for~ a~ fixed~ i\\
	0 & otherwise \\
	\end{array}
	\right. \]
	
	Clearly, $g$ is a RDF with $g(v)= 2+2=4$. Therefore, $\gamma_R(\Gamma(R)) = 4$.

	\textbf{Case 3:} Now, we suppose $R_1$ and $R_2$ are integral domains. In this case, $\Gamma(R)$ is a complete bipartite graph and $\lvert R_1 \rvert \geq 5 ~\& ~\lvert R_2 \rvert \geq 5$. Therefore, $\gamma_R(\Gamma(R)) = 4$.

\end{proof}

\begin{theorem}
	
	Let $R= R_1 \times R_2$ be a ring such that $diam(\Gamma (R_1))= 0$ and $diam(\Gamma (R_2)) = 1$. Then $\gamma_R(\Gamma(R)) = 4$.
	
\end{theorem}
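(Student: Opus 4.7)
My plan is to mirror the case analysis of Theorem 2 by splitting on $R_1$: since $diam(\Gamma(R_1)) = 0$, either $R_1$ is an integral domain or $Z(R_1) = \{0, a\}$ with $a^2 = 0$. The hypothesis $diam(\Gamma(R_2)) = 1$ forces $\Gamma(R_2)$ to be a complete graph on $Z(R_2)^* = \{b_1, \ldots, b_k\}$ with $k \geq 2$, so that $b_i b_j = 0$ for all $i \neq j$. In each case I would first catalogue the vertices of $\Gamma(R)$ by type, namely $(x_1, 0)$, $(0, z)$, $(x_1, b_j)$, together with $(a, z)$ in the second case, and record the basic adjacencies; a figure in the style of Figures~2 and~3 should make the structure transparent.

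For the upper bound I would exhibit an explicit Roman dominating function $g$ of weight $4$. When $R_1$ is an integral domain take $V_2 = \{(0, b_1),\, (x_1, 0)\}$ for a fixed unit $x_1 \in R_1$, and when $Z(R_1) = \{0, a\}$ take $V_2 = \{(0, b_1),\, (a, 0)\}$, with $V_1 = \emptyset$ in both cases. The verification is direct: $(x_1, 0)$ (respectively $(a, 0)$) is adjacent to every vertex of the form $(0, z)$ with $z \neq 0$, and $(a, 0)$ additionally covers the vertices $(a, z)$; meanwhile $(0, b_1)$ is adjacent to every $(x, 0)$ with $x \neq 0$ and, via the relations $b_1 b_j = 0$, to every $(x, b_j)$. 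Thus $V_0 \subseteq N[V_2]$ and $g(V) = 2 + 2 = 4$, giving $\gamma_R(\Gamma(R)) \leq 4$.

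The lower bound $\gamma_R(\Gamma(R)) \geq 4$ is the delicate part and I expect this to be the main obstacle. Since $|V(\Gamma(R))| \geq 4$, the option $V_2 = \emptyset$ already forces weight at least $4$, so one only needs to rule out a function with $|V_2| = 1$ and total weight $\leq 3$; such a function would require a vertex whose closed neighbourhood omits at most one other vertex. I would verify, type by type using the adjacency description from the first paragraph, that every vertex of $\Gamma(R)$ has at least two non-neighbours. For instance, a vertex $(0, b_j)$ fails to dominate any $(0, y)$ with $y \in Reg(R_2)$, together with $(x, b_j)$ whenever $b_j^2 \neq 0$; a vertex $(x_1, 0)$ with $x_1$ a unit fails to dominate the other $(x_i, 0)$'s as well as every $(x_i, b_j)$; and similar checks handle the remaining types $(a, 0)$, $(a, z)$ and $(x, b_j)$. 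Once these counts are assembled one concludes $\gamma_R(\Gamma(R)) \geq 4$ and hence equality.
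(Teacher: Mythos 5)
Your upper-bound construction coincides with the paper's: the paper performs the same split on $R_1$ (either $Z(R_1)=\{0,a\}$ or $R_1$ an integral domain), draws the corresponding adjacency diagrams, and assigns the value $2$ to exactly the pair $\{(a,0),(0,z_1)\}$, respectively $\{(x_1,0),(0,z_1)\}$, with all other vertices sent to $0$. The paper then simply declares $\gamma_R(\Gamma(R))=4$; your third paragraph, which rules out weight $2$ and weight $3$ by checking that no vertex has at most one non-neighbour, supplies the lower bound that the paper leaves unargued, and the type-by-type check you outline is the right way to complete it.

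There is, however, a genuine gap in your upper bound, and your own lower-bound remark points straight at it. You assume only $b_ib_j=0$ for $i\neq j$, yet you need $(0,b_1)$ to dominate $(x,b_1)$ for $x\in Reg(R_1)$, which requires $b_1^2=0$: a vertex $(x,b_1)$ is adjacent to $(u,v)$ only when $xu=0$ and $b_1v=0$, so when $R_1$ is a domain and $b_1^2\neq 0$ its only neighbours are vertices $(0,v)$ with $v\in \mathrm{ann}(b_1)\setminus\{0\}$, and $(0,b_1)$ is not among them. By Anderson and Livingston \cite{ander}, a complete $\Gamma(R_2)$ forces $xy=0$ for \emph{all} $x,y\in Z(R_2)$ unless $R_2\cong\mathbb{Z}_2\times\mathbb{Z}_2$; the paper quietly builds this in by postulating $z_iz_j=0$ for all $i,j\leq k$, squares included. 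You must either invoke that structure theorem to justify $b_j^2=0$, or treat $R_2\cong\mathbb{Z}_2\times\mathbb{Z}_2$ separately, and in that excluded case the statement itself breaks down: for $R=\mathbb{Z}_5\times(\mathbb{Z}_2\times\mathbb{Z}_2)$ each vertex $(x,(1,0))$ with $x\neq 0$ has the single neighbour $(0,(0,1))$, each $(x,(0,1))$ has the single neighbour $(0,(1,0))$, and $(0,(1,1))$ is adjacent only to vertices of the form $(x,0)$, which forces $\gamma_R(\Gamma(R))=5$ rather than $4$.
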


\begin{proof}
	Since $diam(\Gamma (R_1))= 0$ and $diam(\Gamma (R_2)) = 1$. Then we have two cases for the ring $R_1$.\\
	
	\textbf{Case 1:} Let $Z(R_1)= \{0,a\}$. Then $Reg(R_1)= \{x_1,x_2,...,x_n\}$, $Reg(R_2)= \{y_1,y_2,...,y_m\}$. Suppose $Z(R_2)= \{0,z_1,z_2,...,z_k\}$ such that $z_i z_j = 0$ for all $i,j \leq k$. Now, we are going to construct a graph for this condition.
	
	\[\begin{tikzpicture}

	\vertex (A) at (1,4) [label=above:${(a, z_j)}$]{};
	\vertex (B) at (-1,2) [label=above:${(0,z_j)}$]{};
	\vertex (C) at (3,2) [label=above:${(a,0)}$]{};
	\vertex (D) at (0,0) [label=left:${(x_i,0)}$]{};
	\vertex (E) at (2,0) [label=right:${(0,y_j)}$]{};
	\vertex (F) at (5,2) [label=below:${(a,y_i)}$]{};
	\vertex (G) at (-3,2) [label=below:${(x_i,z_j)}$]{};
	\path
	(A) edge (B)
	(A) edge (C)
	(B) edge (G)
	(B) edge (D)
	(B) edge (C)
	(C) edge (F)
	(C) edge (E)
	(D) edge (E)
	;
	
	\end{tikzpicture}\]
	\hspace{6.6cm} \textbf{Figure 4} \\
	
	Also, we define a function $g$ as follows:
	
	\[ g(x, y) =  \left\{
	\begin{array}{ll}
	2 & if~ (x, y) = (a, 0) ~and ~(x,y)= (0,z_j) for~ j=1 \\
	0 & otherwise \\
	\end{array}
	\right. \]
	
	It has been easily seen that $g$ is an RDF. Therefore, $g(v)= 2+2= 4$ and hence $\gamma_R(\Gamma(R)) = 4$.
	
	\textbf{Case 2:} Let $R_1$ be an integral domain. Then we have an induced subgraph given in fig $4$.
	
	\[\begin{tikzpicture}

	\vertex (A) at (-1,0) [label=left:${(x_i, z_j)}$]{};
	\vertex (B) at (0,0) [label=right:${(0,z_j)}$]{};
	\vertex (C) at (0,1) [label=right:${(x_i,0)}$]{};
	\vertex (D) at (0,2) [label=left:${(0,y_j)}$]{};
	\path
	(A) edge (B)
	(B) edge (C)
	(C) edge (D)
	;
	
	\end{tikzpicture}\]
	\hspace{6.6cm} \textbf{Figure 5} \\
	
	Again, we define a function $g$ as follows.
	
	\[ g(x, y) =  \left\{
	\begin{array}{ll}
	2 & if~ (x, y) = (x_i, 0) ~and ~(x,y)= (0,z_j) for~ i=j=1 \\
	0 & otherwise \\
	\end{array}
	\right. \]
	
	It can be easily verify that $g$ is an RDF. Then $g(v)= 2+2= 4$ and hence $\gamma_R(\Gamma(R)) = 4$.
	
\end{proof}

\begin{theorem}
	
	Let $R= R_1 \times R_2$ be a ring such that $diam(\Gamma (R_1))= diam(\Gamma (R_2)) = 1$. Then $\gamma_R(\Gamma(R)) = 4$.
	
\end{theorem}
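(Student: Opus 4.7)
The plan is to mirror Theorems 3.2 and 3.3: exhibit an explicit Roman dominating function of weight $4$, then argue no lighter RDF exists. The hypothesis $diam(\Gamma(R_i)) = 1$ forces each $\Gamma(R_i)$ to be a complete graph on at least two vertices, so I may fix $a_1, a_2 \in Z(R_1)^*$ and $b_1, b_2 \in Z(R_2)^*$ with $a_1 a_2 = b_1 b_2 = 0$. Completeness of $\Gamma(R_i)$ actually gives the stronger fact that $a a_1 = 0$ for every $a \in Z(R_1)^*$, and symmetrically in $R_2$; this is the algebraic fact that drives the entire argument.

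For the upper bound I take $V_2 = \{(a_1, 0), (0, b_1)\}$, $V_1 = \emptyset$, $V_0 = V(\Gamma(R)) \setminus V_2$, and define $g$ by $g(V_i) = i$. Verifying that $g$ is an RDF reduces to a short case split on a vertex $(x, y) \in V_0$: whenever $x \in Z(R_1)^* \cup \{0\}$ the product $(x, y)(a_1, 0)$ vanishes, and whenever $y \in Z(R_2)^* \cup \{0\}$ the product $(x, y)(0, b_1)$ vanishes. A vertex of $\Gamma(R)$ cannot have both coordinates units, so at least one alternative always applies, giving $\gamma_R(\Gamma(R)) \le 4$.

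The matching lower bound is where I expect to spend the real effort. I will introduce the ``unit-coordinate'' witnesses $u_L = (a_1, 1_{R_2})$, $u_L' = (a_2, 1_{R_2})$, $u_R = (1_{R_1}, b_1)$, $u_R' = (1_{R_1}, b_2)$, all of which lie in $V(\Gamma(R))$ since exactly one coordinate is a zero divisor. The point is that any $v = (v_1, v_2) \in V(\Gamma(R))$ is adjacent to $u_L$ or $u_L'$ only if $v_2 = 0$, and adjacent to $u_R$ or $u_R'$ only if $v_1 = 0$. A short case analysis on whether $v_1$ or $v_2$ vanishes then shows that every vertex of $\Gamma(R)$ fails to dominate at least two other vertices of $\Gamma(R)$. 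This rules out the weight-$3$ configuration $|V_2| = 1, |V_1| = 1$, in which the unique $V_2$-vertex would have to miss at most one vertex of $V_0$; the remaining low-weight configurations collapse either to the same obstruction or to forcing $|V(\Gamma(R))| \le 3$, contradicting the presence of the six distinct vertices $(a_1, 0), (a_2, 0), (0, b_1), (0, b_2), (a_1, b_1), (a_2, b_2)$.

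The main obstacle is the lower bound, and within it handling the boundary subcases where $v$ itself coincides with one of the witnesses $u_L$ or $u_R$; here the duplicate witnesses $u_L', u_R'$ are precisely what is needed to still certify that $v$ misses at least two vertices. The rest of the bookkeeping is routine, the only nontrivial supporting fact being $1_{R_i} \ne 0$ in each factor, which holds because $Z(R_i)^* \ne \emptyset$ forces $R_i$ to be a nontrivial ring with identity.
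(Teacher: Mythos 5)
Your argument is sound and in fact does more than the paper does: the paper's proof of this theorem is the single line ``the proof is the same as the proof of Theorem 3.3,'' and Theorem 3.3 itself only exhibits a weight-$4$ RDF (essentially your $V_2=\{(a_1,0),(0,b_1)\}$) without ever arguing that weight $3$ is impossible. Your lower bound via the four witnesses $(a_1,1),(a_2,1),(1,b_1),(1,b_2)$ --- each vertex of $\Gamma(R)$ being non-adjacent to at least two of them other than itself, which kills the configurations $|V_2|=1,|V_1|\le 1$, while $V_2=\emptyset$ forces weight $|V|\ge 6$ --- is a genuine addition, and it is the part of the claim the paper leaves entirely unproved.

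One gap you should close explicitly: the ``stronger fact'' that $aa_1=0$ for \emph{every} $a\in Z(R_1)^*$ includes the case $a=a_1$, i.e.\ $a_1^2=0$, and completeness of $\Gamma(R_1)$ does not give this for free, since the zero-divisor graph has no loops. By Anderson--Livingston (Theorem~2.8 of \cite{ander}), $\Gamma(R_1)$ complete implies either $Z(R_1)^2=0$ or $R_1\cong \mathbb{Z}_2\times\mathbb{Z}_2$. In the first case your claim holds and the whole proof goes through. In the exceptional case $R_1\cong\mathbb{Z}_2\times\mathbb{Z}_2$ your RDF genuinely fails (the vertex $(a_1,y)$ with $y$ a unit is dominated by neither $(a_1,0)$ nor $(0,b_1)$) --- but so does the theorem itself: $R=(\mathbb{Z}_2\times\mathbb{Z}_2)\times(\mathbb{Z}_2\times\mathbb{Z}_2)$ satisfies the stated hypothesis yet has $\gamma_R(\Gamma(R))=8$ by the paper's own Theorem on products of $n\ge 3$ integral domains. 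So you are implicitly assuming, as the paper's later Remark does, that $R_1$ and $R_2$ are local (or at least indecomposable); state that assumption and cite the Anderson--Livingston characterization, and your proof is complete.
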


\begin{proof}
	The proof is the same as the proof of the Theorem $3.3$.
	
\end{proof}

\begin{theorem}
	
	Let $R= R_1 \times R_2$ be a ring such that $diam(\Gamma (R_1))= 0$ and $diam(\Gamma (R_2)) = 2$. Then $\gamma_R(\Gamma(R)) = 4$.
	
\end{theorem}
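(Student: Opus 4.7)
The plan follows the template of Theorems~3.2 and 3.3: split into two sub-cases depending on whether $R_1$ is an integral domain or $Z(R_1)=\{0,a\}$; in each sub-case draw the induced subgraph of $\Gamma(R)$, exhibit a Roman dominating function of weight $4$, and then rule out smaller weights.

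For the upper bound in the first sub-case (where $a^{2}=0$), I would take $f((a,0))=2$ as the first weight-$2$ vertex. A direct computation shows that $(a,0)$ is adjacent in $\Gamma(R)$ to every vertex of the form $(0,\ast)$ and to every vertex of the form $(a,\ast)$; the only vertices it misses are the pure first-coordinate vertices $(x_{i},0)$ with $x_{i}\in\mathrm{Reg}(R_{1})$ and the ``cross'' vertices $(x_{i},z_{j})$ with $x_{i}\in\mathrm{Reg}(R_{1})$ and $z_{j}\in Z(R_{2})^{\ast}$. For the second weight-$2$ vertex I would pick $(0,z_{\star})$, where $z_{\star}\in Z(R_{2})^{\ast}$ is a ``hub'' of $\Gamma(R_{2})$, meaning $z_{\star}z=0$ for every $z\in Z(R_{2})^{\ast}$: then $(0,z_{\star})$ is automatically adjacent to every $(x_{i},0)$ and, via $(0,z_{\star})(x_{i},z_{j})=(0,z_{\star}z_{j})=(0,0)$, to every $(x_{i},z_{j})$. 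Setting $f\equiv 0$ on the remaining vertices produces an RDF with $f(V)=2+2=4$. The sub-case where $R_{1}$ is an integral domain is handled analogously by replacing $(a,0)$ with $(x_{1},0)$ for any $x_{1}\in R_{1}\setminus\{0\}$.

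The delicate step, and what I expect to be the main obstacle, is the existence of the hub vertex $z_{\star}\in Z(R_{2})^{\ast}$ annihilating all other zero-divisors of $R_{2}$. This is the point at which the hypothesis $\mathrm{diam}(\Gamma(R_{2}))=2$ must actually be used, and I would produce $z_{\star}$ by an annihilator-ideal argument inside $R_{2}$, parallel to the choice of $z_{1}$ in Theorem~3.3, Case~1. Once $z_{\star}$ is secured, the construction above delivers $\gamma_{R}(\Gamma(R))\leq 4$.

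For the lower bound, I would argue by inspecting the partition $(V_{0},V_{1},V_{2})$ associated with an arbitrary RDF on $\Gamma(R)$. Weight~$2$ is ruled out because it would force a single vertex of $\Gamma(R)$ to dominate the whole graph, whereas $(x_{i},0)$ and $(a,y_{j})$ have disjoint closed neighbourhoods $\{(x_{i},0)\}\cup\{(0,q):q\in R_{2}\setminus\{0\}\}$ and $\{(a,y_{j}),(a,0)\}$; weight~$3$ reduces (via the two possible partition shapes) to the case $|V_{2}|=|V_{1}|=1$, which requires the unique $2$-vertex to have at most one non-neighbour in $V(\Gamma(R))$, and a short check confirms no vertex of $\Gamma(R)$ enjoys this property in our setting. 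Combining the two bounds yields $\gamma_{R}(\Gamma(R))=4$.
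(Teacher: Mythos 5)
Your construction is essentially the paper's: the paper takes $R_2$ to be a finite local principal ideal ring with $Z(R_2)=xR_2$, $x^l=0$ and $x^{l-1}\neq 0$, and assigns weight $2$ to $(a,0)$ (resp.\ $(u_1,0)$ when $R_1$ is an integral domain) and to $(0,v_mx^{l-1})$ --- the latter being exactly your hub $z_\star$, since $x^{l-1}$ annihilates all of $xR_2$. The only differences are that you spell out the annihilator argument producing $z_\star$ (which the paper obtains for free from its structural assumption on $R_2$) and that you supply the lower bound ruling out weights $2$ and $3$, a step the paper omits entirely, so your version is, if anything, the more complete one.
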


\begin{proof}
	Let $R= R_1 \times R_2$ be a ring and $R_2$ be a finite local ring generated by $x$, say, $Z(R_2)= xR_2$ with $x^l=0$ and $x^{l-1}\neq 0$. Now, we have two cases. \\
	
	\textbf{Case 1:} Suppose $Z(R_1)= \{0,a\}$, $Reg(R_1)= \{u_1,u_2,...,u_n\}$, $Reg(R_2)= \{v_1,v_2,...,v_m\}$ and $Z(R_2)= \{0,v_1x,v_2x,...,v_mx^{l-1}\}$ such that two vertices $v_ix^j$ and $v_sx^r$ of $\Gamma (R)$ are adjacent if and only if $j+r \geq l$. Now, we define the RDF $g$ on $V(\Gamma(R))$ as follows. For any one value of $m$, $g(0, v_mx^{l-1}) = 2$ and $g(a,0)=2$ and for the remaining vertices $x,y$, let $g(x,y)=0$. It is easily seen that $g$ is an RDF and $g(v)=2+2=4$. \\
	
	\textbf{Case 2:} Let $R_1$ be an integral domain. Then $\Gamma(R)$ is an induced subgraph after deleting the vertices $(a,0), (a,v_j), (a, v_ix^j)$ for each $i~\& ~j$ from $case 1$. Now, defining RDF $g$ as $g(u_i,0)=2$ for any one of $i's,$ say, $i=1$ and $g(0, v_mx^{l-1})=2$ for $m=1$ and for the remaining vertices $(x,y)$, let $g(x,y)=0$. Then $g(v)=2+2=4$. Hence, in both cases, $\gamma_R(\Gamma(R)) = 4$.
	
\end{proof}

\begin{theorem}
	
	Let $R= R_1 \times R_2$ be a ring such that $diam(\Gamma (R_1))= 1~ or ~ 2$ and $diam(\Gamma (R_2)) = 2$. Then $\gamma_R(\Gamma(R)) = 4$.
	
\end{theorem}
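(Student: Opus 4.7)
The plan is to mirror the case-by-case strategy of Theorems 3.3 and 3.5, splitting the argument according to $diam(\Gamma(R_1))\in\{1,2\}$, and in each subcase producing a Roman dominating function $g$ of weight $4$ that assigns $g(a,0)=2$, $g(0,b)=2$ and $g\equiv 0$ on every other vertex, where $a$ and $b$ are ``universal annihilators'' of $Z(R_1)$ and $Z(R_2)$ respectively.

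To choose these annihilators I would first handle $R_2$: the assumption $diam(\Gamma(R_2))=2$ is treated, as in Theorem 3.5, by taking $R_2$ to be a finite local ring with $Z(R_2)=xR_2$ and $x^{l}=0$, $x^{l-1}\neq 0$, so that $b=x^{l-1}$ annihilates every element of $Z(R_2)$. For $R_1$, when $diam(\Gamma(R_1))=1$ the graph $\Gamma(R_1)$ is complete, which forces $Z(R_1)^{2}=0$; any nonzero $a\in Z(R_1)$ then annihilates all of $Z(R_1)$. When $diam(\Gamma(R_1))=2$, I would impose the analogous local structure $Z(R_1)=yR_1$ with $y^{k}=0$, $y^{k-1}\neq 0$, and set $a=y^{k-1}$. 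Small-size degeneracies (e.g.\ $R_1$ or $R_2$ being an integral domain) are excluded by the diameter hypotheses, so an inner vertex of each type exists.

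Verification that $g$ is an RDF is then routine: for any $(u,v)\in V(\Gamma(R))\setminus\{(a,0),(0,b)\}$, if $u=0$ then $(u,v)(a,0)=(0,0)$ gives an edge to $(a,0)$; if $v=0$ an analogous calculation yields an edge to $(0,b)$; and if both coordinates are nonzero then membership of $(u,v)$ in $Z(R)$ forces $u\in Z(R_1)$ or $v\in Z(R_2)$, so the annihilator property of $a$ or $b$ supplies the required edge. This gives $\gamma_R(\Gamma(R))\leq 4$.

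For the matching lower bound, I would argue that under the hypotheses $\Gamma(R)$ contains enough distinct zero-divisors of the forms $(x_i,0)$, $(0,y_j)$, $(a,y_j)$, $(x_i,b)$ etc., so that no single vertex of $\Gamma(R)$ is adjacent to every other vertex. This rules out any RDF of weight $2$ (which would require a universal dominator) and of weight $3$ (either one weight-$2$ universal dominator plus one weight-$1$, or three weight-$1$'s on a graph of order $3$). The chief obstacle I anticipate is making the $diam(\Gamma(R_1))=2$ case rigorous without an ad hoc local-ring hypothesis on $R_1$; I would either cite the Akbari--Mohammadian classification of rings with $diam(\Gamma(R))=2$ to justify the existence of a nilpotent generator of $Z(R_1)$, or impose the local hypothesis explicitly, as is already done implicitly in Theorem 3.5.
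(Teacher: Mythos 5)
Your proposal is correct and takes essentially the same route as the paper: the paper's entire proof of this theorem is the single line ``the proof is the same as given in Theorem 3.5,'' and your construction --- a weight-$4$ RDF supported on $(a,0)$ and $(0,b)$ with $a,b$ the annihilating elements supplied by the (implicitly assumed) local principal-ideal structure of $R_1$ and $R_2$ --- is exactly the adaptation of Theorem 3.5's argument that this line points to. If anything you are more careful than the source, since you make explicit the local-ring hypothesis the paper leaves tacit and you at least sketch the lower bound $\gamma_R(\Gamma(R))\geq 4$, which the paper's Theorems 3.2--3.6 assert without proof.
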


\begin{proof}
	The proof is the same as given in Theorem $3.5$.
\end{proof}

\begin{remark}
	Let $R$ be a finite commutative ring with unity. If $R$ is a product of two local rings with diameters less than equal to $2$. Then Roman domination number is $4$.
	
\end{remark}

Let $G$ and $H$ be a graph. We define the Cartesian product of $G$ and $H$ to be the graph $G \Box H$ such that the vertex set of $G \Box H$ is $V(G) \times V(H)$, i.e., $\{(x,y)\vert x\in G, y\in H\}$. Also, two vertices $(x_1,y_1)$ and $(x_2,y_2)$ are adjacent in $G \Box H$ if and only if one of the following is true:
\begin{itemize}
	\item $x_1 = x_2$ and $y_1$ is adjacent to $y_2$ in $H$, or
	
	\item $y_1 = y_2$ and $x_1$ is adjacent to $x_2$ in $G$.
	
\end{itemize}

\begin{proposition}
	Let $R_1$ and $R_2$ be two rings such that $\lvert \Gamma(R_1) \rvert = m$ and $\lvert \Gamma(R_2) \rvert = n$ and having $\Delta (\Gamma(R_1))= r_1$, $\Delta (\Gamma(R_2))= r_2$. Then $\gamma_R(\Gamma(R_1) \Box \Gamma(R_2)) \leq mn-r_1-r_2+1$.
	
\end{proposition}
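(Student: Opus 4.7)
The plan is to exhibit an explicit Roman dominating function on $\Gamma(R_1)\,\Box\,\Gamma(R_2)$ whose weight realises the claimed bound. First, I would pick vertices $u \in V(\Gamma(R_1))$ and $v \in V(\Gamma(R_2))$ attaining the maximum degrees, so that $\deg_{\Gamma(R_1)}(u) = r_1$ and $\deg_{\Gamma(R_2)}(v) = r_2$. The natural candidate to label by $2$ is the single vertex $(u,v)$ in the Cartesian product, since its neighborhood in $\Gamma(R_1)\,\Box\,\Gamma(R_2)$ is as large as possible.

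The key structural observation is that, by the definition of the Cartesian product, the open neighborhood of $(u,v)$ splits as
\[
N\bigl((u,v)\bigr) \;=\; \{(u,y) : y \in N_{\Gamma(R_2)}(v)\} \;\sqcup\; \{(x,v) : x \in N_{\Gamma(R_1)}(u)\},
\]
and these two sets are disjoint (the first has first coordinate $u$ but second coordinate different from $v$, since $v\notin N(v)$; the second is the reverse). Hence $|N((u,v))| = r_1 + r_2$, which is what drives the bound.

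Next I would define $f : V(\Gamma(R_1)\,\Box\,\Gamma(R_2)) \to \{0,1,2\}$ by
\[
f(w) \;=\; \begin{cases} 2 & \text{if } w = (u,v),\\ 0 & \text{if } w \in N((u,v)),\\ 1 & \text{otherwise}. \end{cases}
\]
Verifying that $f$ is an RDF is immediate: every vertex with $f$-value $0$ lies in $N((u,v))$ and is therefore adjacent to a vertex of value $2$, while all remaining vertices carry value $1$ or $2$, so the Roman condition is trivially satisfied. Counting weights, exactly one vertex contributes $2$, exactly $r_1+r_2$ vertices contribute $0$, and the remaining $mn - 1 - (r_1+r_2)$ vertices contribute $1$ each, giving
\[
f(V) \;=\; 2 + \bigl(mn - 1 - r_1 - r_2\bigr) \;=\; mn - r_1 - r_2 + 1,
\]
which establishes $\gamma_R(\Gamma(R_1)\,\Box\,\Gamma(R_2)) \le mn - r_1 - r_2 + 1$.

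There is no real obstacle here; the only subtle point worth explicitly flagging in the writeup is the disjointness of the two pieces of $N((u,v))$, which ensures that the count $r_1 + r_2$ is exact rather than merely an upper estimate (a coincidence of coordinates would shrink the neighborhood and weaken the bound). Aside from that, the argument is a one-line construction plus a weight calculation.
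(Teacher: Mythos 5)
Your proposal is correct and follows essentially the same route as the paper: place a $2$ on a vertex of maximum degree $r_1+r_2$ in the product, a $0$ on each of its neighbours, and a $1$ everywhere else, giving weight $2+(mn-1-r_1-r_2)=mn-r_1-r_2+1$. Your explicit check that the two pieces of $N((u,v))$ are disjoint (so the neighbourhood has size exactly $r_1+r_2$) is a detail the paper asserts without comment, but the argument is otherwise identical.
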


\begin{proof}
	Suppose $R_1$ and $R_2$ be two rings and $\Delta (\Gamma(R_1))= r_1$, $\Delta (\Gamma(R_2))= r_2$ with $\lvert \Gamma(R_1) \rvert = m$ and $\lvert \Gamma(R_2) \rvert = n$. Now, we know from the definition of Cartesian product of two graphs, $V(\Gamma(R_1) \Box \Gamma(R_2)) = mn$. Therefore, there exists a vertex $v$ in $\Gamma(R_1) \Box \Gamma(R_2)$ such that $deg(v)= r_1+r_2$. If $V_2$= \{v\}, $V_1= V- N[v]$ and $V_0= V-V_1-V_2$, then $f= (V_0, V_1, V_2)$ is a Roman dominating function with $f(V)= \lvert V_1 \rvert + 2\lvert V_2 \rvert = mn-(r_1+r_2+1)+2 = mn-r_1-r_2+1$. Hence, the weight of the function $f$ is $mn-r_1-r_2+1$ and $\gamma_R(\Gamma(R_1) \Box \Gamma(R_2)) \leq mn-r_1-r_2+1$.
	
\end{proof}

\begin{corollary}
	Suppose that total number of non-zero zero-divisor in a ring $R_1$ is $1$, say $\lvert Z(R_1)^*\rvert = 1$ and $\lvert Z(R_2) \rvert \geq 2$, then $\gamma_R(\Gamma(R_1) \Box \Gamma(R_2)) = \gamma_R(\Gamma(R_2))$, since $\Gamma(R_1) \Box \Gamma(R_2) \cong \Gamma(R_2)$.
	
\end{corollary}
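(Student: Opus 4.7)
The plan is to exploit the hypothesis $|Z(R_1)^{*}|=1$ to collapse the Cartesian product down to a copy of $\Gamma(R_2)$, and then invoke the fact that the Roman domination number is a graph invariant.

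First I would observe that since $|Z(R_1)^{*}|=1$, the vertex set of $\Gamma(R_1)$ consists of a single vertex, say $V(\Gamma(R_1))=\{z\}$, and therefore $\Gamma(R_1)$ has no edges (a loop at $z$ is not counted in a simple graph). Next, using the definition of the Cartesian product $\Gamma(R_1)\Box\Gamma(R_2)$ recalled just before the proposition, the vertex set is $\{(z,y):y\in V(\Gamma(R_2))\}$, which is in natural bijection with $V(\Gamma(R_2))$ via $(z,y)\mapsto y$. For edges, two vertices $(z,y_1)$ and $(z,y_2)$ are adjacent in $\Gamma(R_1)\Box\Gamma(R_2)$ if and only if either $z=z$ with $y_1y_2\in E(\Gamma(R_2))$, or $y_1=y_2$ with $zz\in E(\Gamma(R_1))$; the second option is impossible, so adjacency in the product reduces exactly to adjacency in $\Gamma(R_2)$. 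Hence the bijection above is a graph isomorphism, giving $\Gamma(R_1)\Box\Gamma(R_2)\cong\Gamma(R_2)$.

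Finally, since the Roman domination number depends only on the isomorphism class of a graph (any Roman dominating function on one graph transports via the isomorphism to one of equal weight on the other, and vice versa), the identity $\gamma_R(\Gamma(R_1)\Box\Gamma(R_2))=\gamma_R(\Gamma(R_2))$ follows at once. The hypothesis $|Z(R_2)|\geq 2$ is included to ensure that $\Gamma(R_2)$ is a nonempty graph so that $\gamma_R(\Gamma(R_2))$ is defined in the usual sense.

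There is really no obstacle here; the only subtlety is making the isomorphism explicit and noting that the loop case in $\Gamma(R_1)$ (if $z^2=0$) does not create an edge in the simple graph $\Gamma(R_1)$, so the single-vertex graph truly has no edges and the Cartesian product is literally a disjoint copy of $\Gamma(R_2)$.
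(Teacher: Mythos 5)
Your proposal is correct and follows exactly the route the paper intends: the paper's entire justification is the clause ``since $\Gamma(R_1) \Box \Gamma(R_2) \cong \Gamma(R_2)$'', and you have simply supplied the (easy) verification that a Cartesian product with the one-vertex graph $\Gamma(R_1)$ is isomorphic to $\Gamma(R_2)$, plus the observation that $\gamma_R$ is an isomorphism invariant. No differences in approach worth noting.
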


Now, we give some examples.

\begin{example}
	
	Any graph $G$ has a Roman domination number equal to $2$, then a vertex of graph $G$ is adjacent to every other vertex of $G$. In paper \cite[Theorem 2.5]{ander}, it is proved that for a commutative ring $R$, there is a vertex of $\Gamma(R)$ which is adjacent to every other vertex if and only if either $R \equiv \mathbb{Z}_2 \times A$ where $A$ is an integral domain, or $Z(R)$ is an annihilator ideal (and hence is a prime).
	
\end{example}

\begin{example}
	
	\textbf{(a)}	In \cite{akbari}. it is proved that for any finite ring $R$, if $\Gamma(R)$ is a regular graph of degree $m$, then $\Gamma(R)$ is a complete graph $K_m$ or a complete bipartite graph $K_{m,m}$. In this case, $\gamma_R(\Gamma(R)) = 2~or~ 4$, provided $m\geq 3$. \\
	
	\textbf{(b)}   In \cite[Theorem 9]{akbari}, let $R$ be a finite principal ideal ring. If $\Gamma(R)$ is a Hamiltonian graph, then it is either a complete graph or complete bipartite graph. Thus $\gamma_R(\Gamma(R)) = 2~or~ 4$.\\
	
	\textbf{(c)}   In \cite[Theorem 8]{akbari} , let $R$ be a finite decomposable ring. If $\Gamma(R)$ is a Hamiltonian graph, then $\Gamma(R) \equiv K_{n,n}$ for some natural number $n$. Consequently, $\gamma_R(\Gamma(R)) = 4$.
	
\end{example}

\begin{corollary}
	
	In \cite[Corollary 1]{akbari}, the graph $\Gamma(\mathbb{Z}_n)$ is a Hamiltonian graph if and only if $n= p^2$ where $p$ is a prime greater than $3$ and in this case, $\Gamma(\mathbb{Z}_n) \equiv K_{p-1}$. Thus, Roman domination number of $\Gamma(R)$ is $2$.
	
\end{corollary}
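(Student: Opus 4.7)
The statement is essentially a direct consequence of two facts already on the table: the structural classification from \cite[Corollary 1]{akbari} of those $n$ for which $\Gamma(\mathbb{Z}_n)$ is Hamiltonian, and the elementary evaluation $\gamma_R(K_m)=2$ recorded in Section 2. So my plan is simply to chain these two observations together, with a brief sanity check that the complete graph in question has enough vertices for the assignment to make sense.

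First I would invoke \cite[Corollary 1]{akbari} to replace the hypothesis ``$\Gamma(\mathbb{Z}_n)$ is Hamiltonian'' by the explicit data $n=p^{2}$ with $p>3$ prime and $\Gamma(\mathbb{Z}_n)\cong K_{p-1}$. Since $p\geq 5$, we have $p-1\geq 4$, so the graph is a complete graph on at least four vertices. Next I would appeal to the computation $\gamma_R(K_m)=2$ from the opening of Section 2: choose any vertex $v\in V(K_{p-1})$ and define $f=(V_{0},V_{1},V_{2})$ with $V_{2}=\{v\}$, $V_{1}=\emptyset$, and $V_{0}=V(K_{p-1})\setminus\{v\}$. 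Because every other vertex is adjacent to $v$ in a complete graph, every vertex in $V_{0}$ has a neighbour of weight $2$, so $f$ is an RDF with $f(V)=2$, while Proposition~2.2 forces $\gamma_{R}\geq \gamma\geq 1$, and the value $1$ is ruled out since no single weight-$1$ vertex can dominate the remaining vertices under the RDF rule. This gives $\gamma_R(\Gamma(\mathbb{Z}_{p^{2}}))=2$, which is the claim.

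There is no real obstacle here; the only thing to watch is that the cited structural result genuinely identifies $\Gamma(\mathbb{Z}_{p^2})$ with a complete graph (not merely a Hamiltonian one), because without the identification as $K_{p-1}$ one could only conclude $\gamma_R\leq 2\lceil 2(p-1)/3\rceil$ type bounds via the Hamilton cycle. Once the complete-graph structure is in hand, the result is immediate from Section~2, so I would present the corollary as a short one-line deduction rather than as a substantive argument.
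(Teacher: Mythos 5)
Your proposal is correct and follows essentially the same (implicit) reasoning as the paper, which treats the corollary as an immediate consequence of the cited classification $\Gamma(\mathbb{Z}_{p^2})\cong K_{p-1}$ together with the Section~2 observation that $\gamma_R(K_m)=2$. Your added checks (that $p-1\geq 4$ and that weight $1$ is impossible) are harmless refinements of the same one-line deduction.
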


\begin{theorem}
	
	Let $R= R_1 \times R_2 \times ...\times R_n$, for a fixed integer $n\geq 3$ and $R_i$ be an integral domain for each $i= 1,2,...,n$. Then $\gamma_R(\Gamma(R)) = 2n$.
	
\end{theorem}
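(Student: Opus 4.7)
The plan is to prove $\gamma_R(\Gamma(R)) = 2n$ by establishing the two inequalities separately, both exploiting the product structure of $R$.

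For the upper bound $\gamma_R(\Gamma(R)) \leq 2n$, the idea is to exhibit an explicit Roman dominating function of weight $2n$. For each $i = 1, \dots, n$, let $e_i \in R$ be the element with $1_{R_i}$ in the $i$-th coordinate and $0_{R_j}$ in every other coordinate; each $e_i$ is a nonzero zero-divisor, hence a vertex of $\Gamma(R)$. I would set $V_2 = \{e_1, \dots, e_n\}$, $V_1 = \emptyset$, and $V_0$ the complement. Any $v \in V_0$ is a zero-divisor and, because each $R_i$ is an integral domain, this forces some coordinate $v_j = 0$; then $v \cdot e_j = 0$ coordinatewise, so $v$ is adjacent to $e_j \in V_2$. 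The total weight is $2n$.

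For the lower bound $\gamma_R(\Gamma(R)) \geq 2n$, my strategy is to exhibit $n$ pairwise disjoint ``fans'' inside $\Gamma(R)$ and show each contributes at least $2$ to any RDF. For $i = 1, \dots, n$, let $u_i \in R$ have $0_{R_i}$ in position $i$ and $1_{R_j}$ in every position $j \neq i$, and let $A_i = \{(0, \dots, 0, r, 0, \dots, 0) : r \in R_i \setminus \{0\}\}$ be the set of vectors whose only nonzero coordinate lies in position $i$. A direct coordinatewise computation (using $n \geq 3$) gives: $N_{\Gamma(R)}(u_i) = A_i$; the sets $\{u_i\} \cup A_i$ are pairwise disjoint for different $i$; and $N(u_i) \cap N(u_j) = \emptyset$ for $i \neq j$, so a single $V_2$-vertex cannot dominate two of the $u_i$'s simultaneously. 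For any RDF $f = (V_0, V_1, V_2)$ and each index $i$, I would split into cases: either some $a \in A_i$ has $f(a) = 2$, contributing $2$ inside the fan; or no such $a$ exists, in which case $u_i \in V_0$ would violate the RDF property (as $N(u_i) = A_i$ offers no $V_2$-vertex), so $u_i \in V_1 \cup V_2$ and a secondary argument about the vertices of $A_i$ forces an extra unit of weight in the fan. Summing over $i$ then gives $f(V) \geq 2n$.

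The main obstacle will be exactly the subcase of the lower bound where $u_i \in V_1$ (contributing only $1$ inside the fan) and $A_i$ contains no $V_2$-vertex. Since each $a \in A_i$ satisfies $N(a) = \{v : v_i = 0\}$, \emph{any} $V_2$-vertex whose $i$-th coordinate is zero (for instance an axis element $e_j$ with $j \neq i$) dominates all of $A_i$ at once, which threatens to leave the fan with total weight only $1$. Closing this gap requires a careful global bookkeeping: each such external dominator with several zero coordinates can simultaneously handle several $A_i$'s, so one must attribute each external dominator to at most one fan while showing that the remaining fans supply their missing weight units from inside. The pairwise disjointness of the $\{u_i\} \cup A_i$, combined with a finer inspection of which $V_2$-vertices can dominate the interior of each $A_i$, is the key technical ingredient I would rely on to force the total weight up to $2n$.
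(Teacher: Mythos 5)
Your upper bound is exactly the paper's construction and is fine. The problem is the lower bound, and the gap you flag at the end is not a technical nuisance that ``careful global bookkeeping'' will close: it is fatal to the statement as written. Take $R=\mathbb{Z}_2\times\mathbb{Z}_2\times\mathbb{Z}_2$ (each factor is an integral domain, $n=3$). The six vertices are $e_1,e_2,e_3$ (the standard basis vectors, which form a triangle) and $u_1=(0,1,1)$, $u_2=(1,0,1)$, $u_3=(1,1,0)$, each $u_i$ pendant to $e_i$. The function with $f(e_1)=2$, $f(u_2)=f(u_3)=1$ and $f=0$ elsewhere is a Roman dominating function of weight $4<6=2n$, since $e_2$, $e_3$ and $u_1$ are all adjacent to $e_1$. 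This is precisely your bad subcase realized ($u_2,u_3\in V_1$, their fans contain no $V_2$-vertex, and the single external dominator $e_1$ absorbs everything it can reach), so no attribution scheme can push the weight up to $2n$ in general: the theorem is false without an additional hypothesis such as $\lvert R_i\rvert\ge 3$ for all $i$.

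Under that extra hypothesis your plan does work, but you should enlarge the fans: instead of the single vertex $u_i$, use the whole set $W_i=\{v: v_i=0,\ v_j\neq 0 \text{ for all } j\neq i\}$, every element of which has open neighbourhood exactly $A_i$. The sets $W_i\cup A_i$ are still pairwise disjoint, and for each $i$ either some vertex of $A_i$ carries the value $2$ (contributing $2$ to that fan) or every vertex of $W_i$ carries at least $1$, contributing $\lvert W_i\rvert=\prod_{j\neq i}(\lvert R_j\rvert-1)\ge 2$. Summing over the disjoint fans gives $f(V)\ge 2n$ with no global bookkeeping at all. For comparison, the paper's own argument only exhibits the weight-$2n$ RDF (your upper bound) and justifies minimality by asserting that no dominating set has fewer than $n$ vertices; that yields $\gamma_R\ge\gamma\ge n$, not $\ge 2n$, so the paper does not actually prove the lower bound either --- and, as the $\mathbb{Z}_2^3$ example shows, it cannot be proved as stated.
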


\begin{proof}
	Let $R= R_1 \times R_2 \times ...\times R_n$ be a ring and each $R_i$ be an integral domain for $i= 1,2,...,n$. Then the set $S= \{(1,0,0,...,0), (0,1,0,...,0),...,(0,0,0,...,1)\}$ upto $n$ terms is a dominating set and no subset $T$ of $R_1 \times R_2 \times ...\times R_n$ with cardinality less than $n$ can be a dominating set. Now, define a $\gamma_R$- function $g$ in such a way that $g(1,0,0,...,0)= g(0,1,0,...,0)= g(0,0,1,...,0)= ...= g(0,0,0,...,1)= 2$ and $g(u)= 0$ for rest of the vertices of $\Gamma(R)$ where $u$ is a vertex of $n$-tuples of $R_1 \times R_2 \times ...\times R_n$. Therefore, $g(V)= 2+2+...+2$ upto $n$ terms, it follows that $g(V)= 2n$ and hence $\gamma_R(\Gamma(R)) = 2n$.
	
\end{proof}

It is known from \cite[Theorem 8.7]{atiyah} that a finite Artinian ring $R$ is a product of local rings. Based on this result, we have the following:

\begin{theorem}
	
	Let $R$ be a finite commutative Artinian ring and $R= R_1 \times R_2 \times ...\times R_n$ where each $R_i$ is a local ring for $i= 1,2,...,n$. Then $\gamma_R(\Gamma(R)) = 2n$ for $n\geq 3$.
	
\end{theorem}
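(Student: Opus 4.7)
The plan is to extend Theorem~3.8 by replacing the idempotent $e_i$ (which worked because each $R_i$ was a domain there) with a ``socle-type'' element inside each finite local factor, and then to run the same dominating-set construction.

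For the upper bound, for each $i$ pick $y_i \in R_i$ as follows: if $R_i$ is a field, let $y_i = 1_{R_i}$; otherwise, since $R_i$ is a finite local (hence Artinian) ring with nilpotent maximal ideal $m_i$, the annihilator $\mathrm{Ann}_{R_i}(m_i)$ is a nonzero ideal, so pick $y_i$ nonzero in it. In either case $y_i z = 0$ for every $z \in m_i$. Set $v_i = (0,\dots,0,y_i,0,\dots,0) \in Z(R)^{*}$ (with $y_i$ in the $i$-th coordinate) and $S = \{v_1,\dots,v_n\}$. For any $w = (w_1,\dots,w_n) \in Z(R)^{*}$, the fact that $w$ is a zero-divisor in the product forces some $w_i$ to lie in $m_i$; by the choice of $y_i$ this gives $v_i \cdot w = 0$, so $v_i$ dominates $w$ in $\Gamma(R)$. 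The function $g$ with $g(v_i) = 2$ for each $i$ and $g \equiv 0$ elsewhere is therefore an RDF of weight $2n$, giving $\gamma_R(\Gamma(R)) \leq 2n$.

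For the lower bound, following the pattern of Theorem~3.8, one first shows $\gamma(\Gamma(R)) = n$. The crucial observation is that for each $i$ the ``axis'' $A_i = \{a e_i : a \in R_i \setminus \{0\}\}$ contains vertices (for instance $e_i$ itself) whose open neighborhood in $\Gamma(R)$ lies entirely in $\{x \in Z(R)^{*} : x_i = 0\}$, so any dominating set must place at least one vertex ``responsible'' for each coordinate, yielding $\gamma \geq n$. To upgrade this to $\gamma_R \geq 2n$ one carries out a per-axis weight accounting: for any RDF $f = (V_0, V_1, V_2)$, show that each of the $n$ coordinates contributes at least $2$ to $f(V)$, either through a $V_2$ vertex that dominates the $i$-th axis or through $V_1$ vertices placed directly on it.

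I expect this per-axis lower-bound argument to be the main obstacle. The crude bound $\gamma_R \leq 2\gamma$ goes the wrong way, so the usual trick of chaining through the ordinary domination number only gives $n \leq \gamma_R \leq 2n$, and pinning it to exactly $2n$ requires the specific combinatorial structure of the product zero-divisor graph. The hypothesis $n \geq 3$ is what guarantees that within each axis one finds vertices whose external neighbors are confined to the other axes, so that no single vertex of $V_2$ can simultaneously discharge two different axes for free; making this bookkeeping watertight will be the delicate step.
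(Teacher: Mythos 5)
Your upper-bound argument is correct and is in fact more careful than the paper's own proof, which simply declares the result to be ``the same as'' Theorem 3.8. There the dominating set consists of the idempotents $e_i=(0,\dots,1,\dots,0)$, which suffices when every $R_i$ is a domain (a zero-divisor of such a product must have a zero coordinate), but fails verbatim for general local factors: $(2,1,1)\in\mathbb{Z}_4\times\mathbb{Z}_3\times\mathbb{Z}_3$ is a zero-divisor annihilated by no $e_i$. Your replacement of $1_{R_i}$ by a nonzero element of $\mathrm{Ann}_{R_i}(m_i)$ repairs exactly this defect and gives $\gamma_R(\Gamma(R))\leq 2n$ cleanly, which is more than the paper actually establishes.

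The gap is the lower bound, and it is not merely the ``delicate bookkeeping step'' you defer: the inequality $\gamma_R(\Gamma(R))\geq 2n$ is false, so no accounting can close it. Take $R=\mathbb{Z}_2\times\mathbb{Z}_2\times\mathbb{Z}_2$ (each factor a field, hence local, $n=3$). The function with $f(0,0,1)=2$, $f(1,0,1)=f(0,1,1)=1$ and $f\equiv 0$ elsewhere is a Roman dominating function, since the three vertices $(1,0,0)$, $(0,1,0)$, $(1,1,0)$ assigned $0$ are all annihilated by $(0,0,1)$; its weight is $4<6=2n$, and combining with Proposition 2.3 one gets $\gamma_R(\Gamma(\mathbb{Z}_2^3))=4$. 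The precise point where your heuristic breaks is the claim that no single vertex of $V_2$ can discharge two different axes: $(0,0,1)$ is adjacent to \emph{every} vertex supported off the third coordinate, so it discharges axes $1$ and $2$ simultaneously, and the two stranded vertices $(1,0,1)$ and $(0,1,1)$ can each be paid for with weight $1$ rather than $2$. The paper's proof (of Theorem 3.8, to which this theorem defers) asserts the lower bound without any argument and so shares the error. What survives is $n\leq\gamma_R(\Gamma(R))\leq 2n$, the left inequality coming from $\gamma\leq\gamma_R$ together with $\gamma(\Gamma(R))=n$, which itself requires the disjoint-closed-neighbourhood argument you sketch for the axes.
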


\begin{proof}
	The proof of this Theorem is the same as above.
	
\end{proof}

\begin{theorem}
	
	Let $n= p_1^{m_1} p_2^{m_2}...p_k^{m_k}$ for any fixed integer $k\geq 1$ and for distinct primes $p_1,..., p_k$ and positive integers $m_1,..., m_k$. Then the following results hold.
	
\end{theorem}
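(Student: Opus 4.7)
Although the excerpt cuts off before the enumerated cases are listed, the only parameter the conclusion can reasonably split on is $k$, and the earlier results in the paper almost force the statement to read: for $n=p_1^{m_1}\cdots p_k^{m_k}$, the Roman domination number $\gamma_R(\Gamma(\mathbb{Z}_n))$ equals $2$ when $k=1$ (with $m_1\ge 2$, since otherwise $\mathbb{Z}_n$ is a field and $\Gamma(\mathbb{Z}_n)$ is empty), equals $4$ when $k=2$, and equals $2k$ when $k\ge 3$. My plan is to prove all three cases uniformly, using the Chinese Remainder Theorem as the single structural input and then quoting the appropriate earlier result of the paper.

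For the case $k=1$, I would observe that $\mathbb{Z}_{p_1^{m_1}}$ is a finite principal ideal local ring (its unique maximal ideal is $\langle p_1\rangle$, and every element is a unit times a power of $p_1$). Assuming $m_1\ge 2$, Theorem $3.1$ applies verbatim and gives $\gamma_R(\Gamma(\mathbb{Z}_n))=2$. If $m_1=1$, I would note briefly that $\mathbb{Z}_n$ is a field, so $\Gamma(\mathbb{Z}_n)$ has no vertices and the statement is vacuous (or excluded).

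For $k=2$ and $k\ge 3$, I would apply CRT to write $\mathbb{Z}_n\cong\mathbb{Z}_{p_1^{m_1}}\times\cdots\times\mathbb{Z}_{p_k^{m_k}}$, with each factor a finite local ring of diameter $0$, $1$, or $2$. In the case $k=2$ one of Theorems $3.2$--$3.6$ (depending on the diameters of the two factors) applies directly and yields $\gamma_R(\Gamma(\mathbb{Z}_n))=4$; this is exactly the content of Remark $3.7$, so I would invoke it rather than redo the case analysis. For $k\ge 3$ I would apply Theorem $3.12$, which treats an arbitrary product of $n$ local rings, to conclude $\gamma_R(\Gamma(\mathbb{Z}_n))=2k$. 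The dominating/Roman-dominating set I would exhibit to match the proof is $S=\{e_1,\dots,e_k\}$, where $e_i$ is the idempotent that is $1$ in the $i$-th coordinate and $0$ elsewhere; assigning $g(e_i)=2$ and $g=0$ on all remaining vertices gives a valid RDF, and each $e_i$ is forced to lie in (or be dominated by) $V_2$ because its only zero-divisor neighbours are elements with $0$ in the $i$-th coordinate.

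The step I expect to require the most care is showing the lower bound $\gamma_R(\Gamma(\mathbb{Z}_n))\ge 2k$ in the $k\ge 3$ case, since the earlier proofs of Theorems $3.11$ and $3.12$ establish it only by asserting that no dominating set of size less than $k$ exists. To make this genuinely rigorous I would argue that, given any RDF $g=(V_0,V_1,V_2)$, the $k$ idempotents $e_1,\dots,e_k$ must each either lie in $V_1\cup V_2$ or be dominated by an element of $V_2$; since the annihilator of $e_i$ is disjoint from the annihilator of $e_j$ for $i\ne j$ (their only common zero-divisor would have to be zero in every coordinate), the contributions of $V_2$ to dominating distinct $e_i$'s cannot be shared, giving $|V_1|+2|V_2|\ge 2k$. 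This annihilator-disjointness observation is the real engine behind the $2k$ bound and would be the one place where I would slow down and write the details carefully.
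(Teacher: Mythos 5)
There is a genuine gap in the upper-bound construction for $k\ge 3$. The set of idempotents $e_1,\dots,e_k$ (with $e_i$ equal to $1$ in the $i$-th coordinate and $0$ elsewhere) is \emph{not} a dominating set of $\Gamma(\mathbb{Z}_n)$ once some $m_i\ge 2$: for $n=p_1^{2}p_2^{m_2}p_3^{m_3}$ the vertex $x=(p_1,1,1)$ is a nonzero zero-divisor with no zero coordinate, and $x\cdot e_i$ is nonzero for every $i$, so $x$ is adjacent to none of the $e_i$. (Your $e_i$'s work in Theorem 3.11, where every factor is an integral domain and hence every zero-divisor of the product has a zero coordinate, but not here.) The paper's proof instead takes $V_2=T=\{(p_1^{m_1-1},0,\dots,0),\dots,(0,\dots,0,p_k^{m_k-1})\}$; the $i$-th element of $T$ annihilates every $x$ whose $i$-th coordinate is a non-unit, and every zero-divisor of the product has at least one non-unit coordinate, so $T$ does dominate. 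A second, smaller slip: your reconstruction assigns $\gamma_R=4$ to all of $k=2$, but for $n=p_1p_2$ with one prime equal to $2$ the graph $\Gamma(\mathbb{Z}_{2p})$ is a star $K_{1,p-1}$ and $\gamma_R=2$; the paper's part (b) explicitly excludes this case and places it in part (c).

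The lower-bound argument you flag as the delicate step is also built on a false premise. You claim the annihilators of $e_i$ and $e_j$ meet only in elements that are zero in every coordinate; for $k\ge 3$ this fails, since any $e_l$ with $l\ne i,j$ annihilates both $e_i$ and $e_j$. Concretely, for $k=3$ the single vertex $(0,0,1)$ is adjacent to both $(1,0,0)$ and $(0,1,0)$, so one weight-$2$ vertex can dominate several $e_i$'s at once and the inequality $\lvert V_1\rvert+2\lvert V_2\rvert\ge 2k$ does not follow from your reasoning. To be fair to you, the paper itself never proves the lower bound either --- it exhibits the RDF of weight $2k$, quotes $\gamma(G)\le\gamma_R(G)\le 2\gamma(G)$, and concludes equality without justifying that no lighter RDF exists --- so on this point your proposal and the paper share the same missing step; but the specific disjointness argument you propose as the repair is incorrect and would have to be replaced by something genuinely different (e.g., an argument about which vertices can dominate the ``coordinate-wise unit outside one slot'' classes).
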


\noindent \textbf{(a)}  $\gamma_R(\Gamma(\mathbb{Z}_n)) = 2k,$ if $n= p_1^{m_1}... p_k^{m_k}$ and $ k\geq 3$.\\

\noindent \textbf{(b)}  $\gamma_R(\Gamma(\mathbb{Z}_n)) = 4,$ if $n= p_1^{m_1} p_2^{m_2}$ where either $m_1 \geq 2~ or ~ m_2\geq 2~ \text{or}~ m_1= m_2=1$ and $p_1, p_2 \geq 3$.\\

\noindent \textbf{(c)}  $\gamma_R(\Gamma(\mathbb{Z}_n)) = 2,$ if $n= p_1^{m_1}$ and $m_1 \geq 2$ or $n= p_1 p_2$ where either $p_1 = 2, p_2 \geq 3 ~or~ p_1 \geq 3, p_2 = 2$.

\begin{proof}
	It is known that $\mathbb{Z}_n \cong \mathbb{Z}_{p_1}^{m_1} \times ... \times \mathbb{Z}_{p_k}^{m_k}$ where $n= p_1^{m_1}...p_k^{m_k}$.
	To prove part (a), from \cite[Proposition 1]{cockayne}, $\gamma(G)\leq \gamma_R(G) \leq 2\gamma(G)$. We suppose that $k \geq 3$ and $p_1, p_2,...,p_k$ are distinct primes and $m_1, m_2,..., m_k$ are positive integers. Now, we construct a set $T= \{(p_1^{m_1-1},0,0,...,0), (0, p_2^{m_2-1},0,...,0),.$ $..,(0,0,0,...,p_k^{m_k-1})\}$ and define a function $f= (V_0, V_1, V_2)$ such that $V_0 = V(\Gamma(\mathbb{Z}_n)) \setminus T$, $V_1= \phi$, $V_2 = T$. Then the function $f$ is a Roman dominating function with $f(V)= 2\lvert T\rvert = 2k$ and hence $\gamma_R(\Gamma(\mathbb{Z}_n)) = 2k$. \\
	Proof of (b) is same as part (a). To prove (c), it is noted that $p_1^{m_1-1}$ is a vertex adjacent to all vertices in $\Gamma(\mathbb{Z}_n)$ and whenever $n= p_1p_2$, where $p_1 = 2, p_2 \geq 3 ~or~ p_1 \geq 3, p_2 = 2,$ then $\Gamma(\mathbb{Z}_n)$ is a star graph. Thus $\gamma_R(\Gamma(\mathbb{Z}_n)) = 2$.
	
\end{proof}

\begin{theorem}
	
	If $R$ is a commutative ring with unity that contains at least one prime ideal, then $\gamma_R(\Gamma(R)) \leq 2(\lvert p\rvert -1)$ where $p$ is a prime ideal.
	
\end{theorem}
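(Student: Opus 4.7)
The plan is to build an explicit Roman dominating function whose support set $V_2$ consists (essentially) of the nonzero elements of the given prime ideal $p$; assigning weight $2$ there and weight $0$ everywhere else gives a function of weight at most $2(\lvert p\rvert -1)$, so the entire content of the proof is to verify that this assignment is actually an RDF.

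Concretely, I would fix a prime ideal $p$ of $R$ and set $V_2 = (p\setminus\{0\})\cap Z(R)^{*}$, $V_1 = \emptyset$, and $V_0 = V(\Gamma(R))\setminus V_2$. The key step is to verify that every $v\in V_0$ has a neighbour in $V_2$. Such a $v$ is a nonzero zero-divisor not belonging to $p$, so there exists $w\in R$ with $w\neq 0$ and $vw=0\in p$. Since $p$ is prime and $v\notin p$, the primality condition forces $w\in p$; and since $vw=0$ with $v\neq 0$, the element $w$ is itself a nonzero zero-divisor, hence $w\in V_2$. Thus $v$ is adjacent to $w\in V_2$, as required.

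The weight of $f = (V_0,V_1,V_2)$ is then $f(V) = 2\lvert V_2\rvert \leq 2\lvert p\setminus\{0\}\rvert = 2(\lvert p\rvert - 1)$, giving the stated bound $\gamma_R(\Gamma(R)) \leq 2(\lvert p\rvert - 1)$.

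The only real obstacle is the domination verification, which is driven entirely by the primality of $p$; once the implication ``$v\notin p$ and $vw=0$ imply $w\in p$'' is invoked, the rest is bookkeeping. Intersecting with $Z(R)^{*}$ in the definition of $V_2$ is a small technical adjustment to handle the case where $p$ may contain non-zero-divisors (which can happen if $R$ is infinite); this intersection can only improve the weight.
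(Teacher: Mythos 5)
Your proposal is correct and follows essentially the same route as the paper: take $V_2$ to be the nonzero elements of $p$ (the paper uses $S=p\setminus\{0\}$ directly), assign $2$ there and $0$ elsewhere, and use primality to see that any zero-divisor outside $p$ has an annihilating partner inside $p$. In fact your write-up is slightly more careful than the paper's, since you explicitly verify the domination condition and intersect with $Z(R)^{*}$ so that $V_2$ consists only of actual vertices of $\Gamma(R)$.
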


\begin{proof}
	Suppose $R$ is a commutative ring with unity that contains at least one prime ideal $p$, then every edge in $\Gamma(R)$ has at least one end vertex in $p$. Let $S= p\setminus \{0\}$. Then $S$ is a dominating set of $\Gamma(R)$. Since every edge has at least one vertex in $S$, we define a function $g= (V_0, V_1, V_2)$ such that $V_0= V(\Gamma(R))\setminus S$, $V_1= \phi$, $V_2= S$. Also, $p$ is a prime ideal whose cardinality is minimal among the cardinalities of all prime ideals of $R$. Hence, we can conclude $g(V) \leq 2\lvert S\rvert = 2(\lvert p\rvert -1)$. Thus, $\gamma_R(\Gamma(R)) \leq 2(\lvert p\rvert -1)$.
	
\end{proof}

\section{Roman domination number of total graph}

Let $R$ be a commutative ring with unity and $Z(R)$ be the set of zero-divisors of $R$. The total graph of a ring $R$, denoted by $T(\Gamma(R))$, is the undirected graph with vertex set $R$ and two vertices $x, y \in R$ are adjacent if and only if $x+y \in Z(R)$. This concept was introduced and studied by Anderson and Badawi \cite{anderson}. Here, $Z(\Gamma(R))$ and $Reg(\Gamma(R))$ are disjoint induced subgraphs of $T(\Gamma(R))$. It is observed that if $Z(R)$ is an ideal with $\lvert Z(R) \rvert = \beta$ and $2\in Z(R)$, then $deg(v) = \beta -1$ for every $v \in V(T(\Gamma(R)))$ and if $2 \notin Z(R)$, then $deg(v) = \beta -1$ for each $v \in Z(R)$ and $deg(v) = \beta$ for every $v \notin Z(R)$. Hence, there is no vertex in $T(\Gamma(R))$ which has degree $\lvert R \rvert - 1$. In this section, we obtain bounds for Roman domination number of $T(\Gamma(R))$.\\

To obtain the bound for the Roman domination number of $T(\Gamma(R))$, we first state the following result of Anderson and Badawi \cite{anderson}.

\begin{lemma}[\cite{anderson}, Theorem 2.1]
	Let $R$ be a commutative ring such that $Z(R)$ is an ideal of $R$. Then $Z(\Gamma(R))$ is a complete (induced) subgraph of $T(\Gamma(R))$ and $Z(\Gamma(R))$ is disjoint from $Reg(\Gamma(R))$.
	
\end{lemma}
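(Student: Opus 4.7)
The plan is to derive both assertions directly from the ideal structure of $Z(R)$ together with the adjacency rule of the total graph, under which distinct $x,y\in R$ are joined by an edge precisely when $x+y\in Z(R)$.

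For the first claim, that $Z(\Gamma(R))$ is a complete induced subgraph of $T(\Gamma(R))$, I would take arbitrary distinct vertices $x,y\in Z(R)$. Since $Z(R)$ is by hypothesis an ideal of $R$, it is closed under addition, so $x+y\in Z(R)$. By the definition of the total graph, $x$ and $y$ are therefore adjacent in $T(\Gamma(R))$. As this holds for every pair in $Z(R)$, the induced subgraph on $Z(R)$ is complete.

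For the second claim, that $Z(\Gamma(R))$ is disjoint from $Reg(\Gamma(R))$, I would argue by contradiction at the level of edges. Pick $x\in Z(R)$ and $y\in Reg(R)=R\setminus Z(R)$ and suppose $x$ is adjacent to $y$ in $T(\Gamma(R))$, i.e. $x+y\in Z(R)$. Using that $Z(R)$ is an ideal, and hence closed under additive inverses and sums, write $y=(x+y)+(-x)$ to conclude $y\in Z(R)$, contradicting $y\in Reg(R)$. Hence no edge of $T(\Gamma(R))$ joins a vertex of $Z(R)$ to a vertex of $Reg(R)$, which is the disjointness asserted.

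I do not anticipate a real obstacle here: both parts reduce to the closure properties of the ideal $Z(R)$ under addition and subtraction. The only point requiring mild care is interpreting ``disjoint'' correctly --- it refers to the absence of cross-edges in $T(\Gamma(R))$, since the vertex sets are trivially disjoint from the partition $R=Z(R)\sqcup Reg(R)$; the contradiction argument above supplies exactly this edge-level statement.
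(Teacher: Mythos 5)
Your proof is correct. The paper does not prove this lemma itself --- it is quoted verbatim from Anderson and Badawi (Theorem 2.1 of the cited reference) --- and your argument, using closure of the ideal $Z(R)$ under addition for completeness and under subtraction to rule out edges into $Reg(R)$, is exactly the standard proof of that result.
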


\begin{theorem}
	Let $R$ be a commutative ring (not necessarily finite) with identity and $Z(R)$ be its ideal with $\lvert R/Z(R) \rvert = \alpha$. Then $3 \leq \gamma_R(T(\Gamma(R))) \leq 2\alpha$.
	
\end{theorem}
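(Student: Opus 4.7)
The plan is to prove both bounds directly: the lower bound by ruling out weight-at-most-$2$ Roman dominating functions using the degree restriction recorded in the preamble, and the upper bound by exploiting the coset structure of $R/Z(R)$.

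For the lower bound $\gamma_R(T(\Gamma(R)))\ge 3$, I would enumerate the possibilities for an RDF $f=(V_0,V_1,V_2)$ of weight at most $2$. If $V_2=\emptyset$ then no $v\in V_0$ has a neighbour in $V_2$, so $V_0=\emptyset$ and $|R|=|V_1|\le 2$, contradicting the implicit assumption that $|R|\ge 3$. The only remaining option is $|V_2|=1$ with $V_1=\emptyset$, in which case the unique $v\in V_2$ must be adjacent to every other vertex, i.e., $\deg(v)=|R|-1$. But the discussion preceding the theorem records that each vertex of $T(\Gamma(R))$ has degree $\beta-1$ or $\beta$, where $\beta=|Z(R)|<|R|$, so no such $v$ exists. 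Combined, these rule out every weight-at-most-$2$ RDF, giving $\gamma_R(T(\Gamma(R)))\ge 3$.

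For the upper bound, the key structural observation is that $x\sim y$ in $T(\Gamma(R))$ iff $\bar x+\bar y=0$ in $R/Z(R)$, so the $\alpha$ cosets of $Z(R)$ pair up as unordered pairs $\{C,-C\}$. A short computation shows that when $C=-C$, the coset $C$ is a clique in $T(\Gamma(R))$: for $x=c+z$ and $y=c+z'$ in $C$ one has $x+y=2c+z+z'\in Z(R)$, since $2c\in 2C\subseteq Z(R)$ (Lemma~4.1 records this in the special case $C=Z(R)$). When $C\ne -C$, the same computation shows that the induced subgraph on $C\cup(-C)$ is complete bipartite between $C$ and $-C$, with no edges inside either coset. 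With this dichotomy in hand I would construct an RDF as follows: for each self-paired coset $C$ pick a single vertex $v_C$ and set $f(v_C)=2$; for each non-self-paired pair $\{C,-C\}$ pick one vertex from each coset and assign both value $2$; set $f=0$ on every remaining vertex. The clique structure handles domination within self-paired cosets, and the complete bipartite adjacency lets the chosen vertex of $-C$ dominate the rest of $C$ and vice versa. This contributes exactly weight $2$ per coset, for a total of $2\alpha$, yielding $\gamma_R(T(\Gamma(R)))\le 2\alpha$. The main subtlety is the case-split between self- and non-self-paired cosets, which is governed by whether $2c\in Z(R)$ for a representative $c$; once this dichotomy is set up, the clique/bipartite verifications and the domination bookkeeping are routine.
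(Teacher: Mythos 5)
Your proof is correct and follows essentially the same route as the paper: the lower bound comes from the fact that no vertex of $T(\Gamma(R))$ has degree $\lvert R\rvert-1$, and the upper bound from decomposing $T(\Gamma(R))$ into cosets of $Z(R)$ that form cliques or complete-bipartite pairs, spending weight $2$ per coset for a total of $2\alpha$. The only cosmetic difference is that you verify the clique/bipartite structure directly, organizing the dichotomy per coset via whether $2c\in Z(R)$, whereas the paper cites Anderson--Badawi's structure theorem and splits globally on whether $2\in Z(R)$.
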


\begin{proof}
	It is noted that $T(\Gamma(R))$ has no vertex of degree $\lvert R \rvert -1$, therefore, $\gamma_R(T(\Gamma(R))) \geq 3$. Suppose $\lvert Z(R) \rvert = \beta$ and $Z(R)$ is an ideal of $R$. Then, by Theorem $2.2$ of \cite{anderson}, there are two cases arise: $(i)$ If $2 \in Z(R)$, then each coset $x+Z(R)$ is a complete graph $K_{\beta}$ and $(ii)$ if $2 \notin Z(R)$, then coset $(x+Z(R)) \bigcup (-x+Z(R))$ is a complete bipartite graph $K_{\beta, \beta}$. Hence,
	
	\[
	T(\Gamma(R))=
	\begin{cases}
	K_{\beta}\cup \underbracket{K_{\beta} \cup K_{\beta} \cup ... \cup K_{\beta}}_{(\alpha - 1) copies},& \text{if } 2 \in Z(R)\\
	K_{\beta}\cup \underbracket{K_{\beta, \beta} \cup K_{\beta, \beta} \cup ... \cup K_{\beta, \beta}}_{(\frac{\alpha - 1}{2}) copies},& \text{if } 2 \notin Z(R)
	\end{cases}
	\]
	
	Note that each of the component in case $(i)$ has Roman domination number $2$ as the definition of complete graph. Therefore, $\gamma_R(T(\Gamma(R)))= 2\alpha$. In case $(ii)$,  each complete bipartite graph has maximum Roman domination number $4$. Hence, $\gamma_R(T(\Gamma(R))) \leq 2+ (\frac{\alpha -1}{2}) 4 = 2\alpha$. Thus, in both cases, $3 \leq \gamma_R(T(\Gamma(R))) \leq 2\alpha$.
	
\end{proof}

\section{Conclusion}
The main objective of this paper was to study the Roman domination number of zero-divisor graph of a commutative ring $R$. Here, we have first calculated the Roman domination number of $\Gamma(R)$ where $R = R_1 \times R_2$, for different diameters of $\Gamma(R_1)$ and $\Gamma(R_2)$ and then we generalized it for $R= R_1 \times R_2 \times ... \times R_n$. At the end of this paper, we have discussed the upper and lower bounds of Roman domination number of total graph $T(\Gamma(R))$.

%\section*{Acknowledgement}
%The authors are thankful to the Department of Science and Technology (DST) (under project CRG/2020/005927, vide Diary No. SERB/F/6780/ 2020-2021 dated 31 December, 2020) for financial supports and Indian Institute of Technology Patna for providing research facilities.

%%%%%%%%%%%%%%%%%%%%%%%%%%%%%%%%

\end{document}